\crefname{section}{\textsection}{\textsection}
\Crefname{section}{\textsection}{\textsection}
\crefname{figure}{Fig.}{Figs.}
\Crefname{figure}{Fig.}{Figs.}
\newcommand{\email}[1]{\href{mailto:#1}{\nolinkurl{#1}}}
\newcommand{\OEIS}[1]{\href{http://oeis.org/#1}{\nolinkurl{#1}}}
\newcommand{\doi}[1]{\textsc{doi}: \href{http://dx.doi.org/#1}{\nolinkurl{#1}}}
\setlist[enumerate, 1]{label = (\roman*), font = \upshape}
\setlist[itemize, 2]{label = {$\circ$}}
\theoremstyle{plain}
\newtheorem{theorem}{Theorem}
\newtheorem{corollary}[theorem]{Corollary}
\newtheorem{lemma}[theorem]{Lemma}
\theoremstyle{definition}
\newtheorem{conjecture}[theorem]{Conjecture}
\newtheorem{definition}[theorem]{Definition}
\newtheorem{example}[theorem]{Example}
\theoremstyle{remark}
\DeclareMathOperator{\F}{\mathbb{F}}                            %%% Field
\DeclareMathOperator{\Z}{\mathbb{Z}}                            %%% Integer Ring
\DeclareMathOperator{\Q}{\mathbb{Q}}                            %%% Rational Field
\DeclareMathOperator{\R}{\mathbb{R}}                            %%% Real Field
\begin{document}
\title{Classifying Subatomic Domains}
\author{
    Noah Lebowitz-Lockard \\
    Department of Mathematics\\
    University of Georgia \\
    \texttt{\email{noah.lebowitzl25@uga.edu}}
    }
\maketitle

\begin{abstract}
In general, ring theory is focused on atomic rings, i.e. rings in which every element has some factorization into irreducible elements. In a recent paper of Boynton and Coykendall \cite{BC}, the two authors introduce two properties that are slightly weaker than atomicity, which they call ``almost atomicity" and ``quasiatomicity". In this paper, we classify various subatomic properties and show that they are all distinct.
\end{abstract}

\section{Introduction}

Much of ring theory is either focused on UFDs or arbitrary commutative rings. This dichotomy leaves out many properties that are weaker than unique factorization, but are still important in their own right. One of the first attempts to rectify this problem was a paper of D. D. Anderson, D. F. Anderson, and M. Zafrullah \cite{AAZ}, in which the authors classify rings in which every element can be factored, but not necessarily uniquely. In this paper, we classify various types of subatomic domains, i.e. those which do not possess factorization, but still satisfy similar, weaker conditions.

Figure $1$ shows how all of the properties in this paper relate to each other. The central goal of this paper is to establish that Figure $1$ is ``complete" in the sense that there are no additional implications that we did not include. This paper mostly completes that goal with a few exceptions, discussed at the end of this introduction.

\begin{figure}[h]
\centering
\begin{tikzpicture}
\node (UFD) at (-1, 1) {Unique factorization domain};
\node (A) at (0, 0) {Atomic};
\node (SA) at (4, 0) {Semi-atomic};
\node (AA) at (-1.8, -2) {Almost atomic};
\node (QA) at (2.2, -2) {Quasi-atomic};
\node (F) at (0, -4) {Furstenberg};
\node (SF) at (4, -4) {Semi-Furstenberg};
\node (AF) at (-1.8, -6) {Almost Furstenberg};
\node (QF) at (2.2, -6) {Quasi-Furstenberg};
\node (NA) at (3.2, -7) {Not antimatter};
\node (ID) at (4.2, -8) {Integral domain};

\draw[thick, ->] (UFD) -- (A);

\draw[thick, ->] (A) -- (SA);
\draw[thick, ->] (SA) -- (AA);
\draw[thick, ->] (AA) -- (QA);

\draw[thick, ->] (A) -- (F);
\draw[thick, ->] (SA) -- (SF);
\draw[thick, ->] (AA) -- (AF);
\draw[thick, ->] (QA) -- (QF);

\draw[thick, ->] (F) -- (SF);
\draw[thick, ->] (F) -- (AF);
\draw[thick, ->] (SF) -- (QF);
\draw[thick, ->] (AF) -- (QF);

\draw[thick, ->] (QF) -- (NA);
\draw[thick, ->] (NA) -- (ID);
\end{tikzpicture}
\caption{Various classes of rings.}
\label{fig:Bill}
\end{figure}
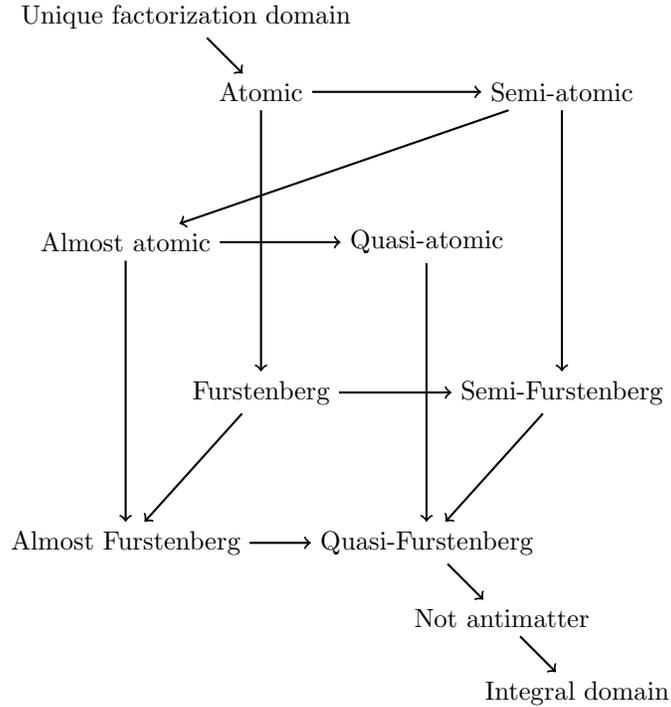

\begin{conjecture} None of the arrows in Figure $1$ are bidirectional.
\end{conjecture}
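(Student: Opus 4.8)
The plan is to establish the conjecture arrow by arrow: for each of the fourteen arrows $X \to Y$ in Figure~\ref{fig:Bill} one must exhibit an integral domain lying in class $Y$ but not in class $X$, and once such a witness is found for every arrow, no arrow can be bidirectional. Because many of the classes are nested --- atomic $\Rightarrow$ semi-atomic $\Rightarrow$ almost atomic $\Rightarrow$ quasi-atomic, and similarly down the Furstenberg side --- a single well-chosen ring often settles several arrows at once, so the real work is concentrated in a few carefully engineered examples. The two extreme arrows are classical. For UFD $\to$ Atomic, take $\mathbb{Z}[\sqrt{-5}]$: it is Noetherian, hence atomic, but $2 \cdot 3 = (1+\sqrt{-5})(1-\sqrt{-5})$ shows it is not a UFD. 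For Not antimatter $\to$ Integral domain, take the ring $\overline{\mathbb{Z}}$ of all algebraic integers: it is an integral domain, but every nonzero nonunit $\alpha$ factors as $\sqrt{\alpha}\cdot\sqrt{\alpha}$, a product of two nonunits (since $\sqrt{\alpha}$ is again an algebraic integer, and a nonunit whenever $\alpha$ is), so $\overline{\mathbb{Z}}$ has no atoms at all.

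Next, the four ``vertical'' arrows relating the atomic row to the Furstenberg row --- Atomic $\to$ Furstenberg, Semi-atomic $\to$ Semi-Furstenberg, Almost atomic $\to$ Almost Furstenberg, and Quasi-atomic $\to$ Quasi-Furstenberg --- can all be broken at once using the single ring $R = \mathbb{Z} + x\,\mathbb{Q}[x]$, in the spirit of the $D+M$ constructions of \cite{AAZ}. On one hand $R$ is Furstenberg: a short check shows every nonzero nonunit $f$ is divisible by an atom of $R$ --- namely a rational prime when $f(0) \in \{0\}\cup(\mathbb{Z}\setminus\{\pm 1\})$, and otherwise a suitable $\mathbb{Q}^{\times}$-rescaling of one of the $\mathbb{Q}[x]$-irreducible factors of $f$, each of which is an atom of $R$. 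On the other hand $R$ is not even quasi-atomic: for any nonzero $g$, the element $xg$ vanishes at $0$, so in any putative atomic factorization $xg = a_1\cdots a_n$ some $a_i$ vanishes at $0$, forcing $a_i = x\,w(x) = 2\cdot\big(\tfrac12 x\,w(x)\big)$ with both factors nonunits --- contradicting irreducibility of $a_i$. Thus $R$ is Furstenberg (so also semi-, almost-, and quasi-Furstenberg) yet fails every atomicity condition, which is exactly what those four arrows need.

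What remains are the eight ``interior'' arrows: that atomic, semi-atomic, almost atomic, and quasi-atomic are pairwise distinct (three arrows); that Furstenberg is strictly stronger than each of semi-Furstenberg and almost-Furstenberg (two arrows); that each of those two is strictly stronger than quasi-Furstenberg (two arrows); and that quasi-Furstenberg is strictly stronger than ``not antimatter'' (one arrow). For these I would build bespoke examples --- monoid algebras $F[x;M]$ for carefully chosen additive submonoids $M \subseteq \mathbb{Q}_{\geq 0}$, further $D+M$ and $A + x\,B[x]$ rings, and pullback constructions --- and reuse the examples already separating almost-atomic from quasi-atomic in Boynton--Coykendall \cite{BC}.

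The main obstacle is precisely this interior group, and within it the very weakest properties. Separating quasi-Furstenberg from ``not antimatter'' (one needs a domain possessing at least one atom but in which some nonzero nonunit has \emph{no} multiple that is a product of atoms) and cleanly prying apart semi-Furstenberg from almost-Furstenberg are delicate, because the most natural constructions tend either to satisfy all of these weak conditions simultaneously or to collapse all the way to antimatter; what is needed is a domain whose set of atoms is extremely sparse while its multiplicative structure stays far from atomic. I expect a few of these interior cases to resist such an attack, which is presumably why the statement is recorded here as a conjecture rather than proved outright.
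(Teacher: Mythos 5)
Your overall strategy --- break each arrow $X \to Y$ by exhibiting a domain in $Y$ but not in $X$, exploiting the nesting of the classes so that one example settles several arrows --- is exactly the paper's strategy, and the pieces you do supply are sound: $\mathbb{Z}[\sqrt{-5}]$ for UFD versus atomic, the antimatter ring $\overline{\mathbb{Z}}$ for the bottom arrow, and $\mathbb{Z}+x\mathbb{Q}[x]$ as a Furstenberg domain that is not quasi-atomic, which is precisely the paper's construction (v) (Lemma 24) and does break all four vertical arrows at once. But that accounts for only six of the fourteen arrows, and the remaining eight are where all the substance lies; your proposal explicitly defers them to unspecified ``bespoke examples.'' The paper reduces everything to six targeted constructions and actually carries out four of them: a semi-atomic domain that is not Furstenberg (Section 4: $\mathbb{F}_2[X]_{(X)}$ with exponents in a monoid generated by an irrational $\alpha$, the dyadic rationals, and the elements $p_{n,m}^{-1}(\alpha+m/2^n)$), which kills the arrows atomic $\to$ semi-atomic and Furstenberg $\to$ semi-/almost-Furstenberg; an almost atomic domain that is not semi-Furstenberg (the iterated adjunction construction of Section 5), which kills semi-atomic $\to$ almost atomic and semi-Furstenberg $\to$ quasi-Furstenberg; the ring $\mathbb{Z}[x]+x^2\mathbb{R}[x]$ (Example 9), which is quasi-atomic but not almost atomic and kills almost atomic $\to$ quasi-atomic; and $\bigcup_{n}\mathbb{Z}[x^{1/n}]$ (Lemma 25), which has atoms yet is not quasi-Furstenberg and kills quasi-Furstenberg $\to$ not antimatter. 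None of these four constructions, nor any substitute for them, appears in your write-up.

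To be fair, the statement is recorded as a conjecture and the paper itself does not finish the job: of its six constructions it omits (iii) and (iv), so the arrow almost Furstenberg $\to$ quasi-Furstenberg is left unbroken there as well. Still, your proposal establishes strictly less than the paper's partial progress --- it handles only the arrows that are classical or follow from the single example $\mathbb{Z}+x\mathbb{Q}[x]$, and leaves open every separation among the four atomicity variants, every separation among the Furstenberg variants, and the separation of quasi-Furstenberg from ``not antimatter.'' Those separations are the actual content of the paper, so the proposal as it stands cannot be counted as a proof of the conjecture, nor even as reproducing the portion of it that the paper does prove.
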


\begin{conjecture} Figure $1$ requires no additional arrows.
\end{conjecture}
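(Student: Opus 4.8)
The statement is equivalent to the assertion that the implication relation on the eleven properties of Figure $1$ is exactly the transitive closure of the drawn arrows: for every ordered pair $(P,Q)$ with $Q$ not reachable from $P$ along the arrows, there should be an integral domain satisfying $P$ but failing $Q$. The plan is to exhibit such a domain for each ``missing'' link (i.e.\ the strictness of a drawn arrow) and for each incomparable pair; this finite list of counterexamples then forces \emph{all} the non-implications, because a domain in $P\setminus Q$ also lies in $P'\setminus Q'$ whenever the diagram already gives $P\Rightarrow P'$ and $Q'\Rightarrow Q$. Reading off Figure $1$, the required examples include: an atomic non-UFD; domains witnessing the strictness of each link of the chain atomic $\Rightarrow$ semi-atomic $\Rightarrow$ almost atomic $\Rightarrow$ quasi-atomic and of quasi-Furstenberg $\Rightarrow$ not antimatter $\Rightarrow$ integral domain; a Furstenberg domain satisfying no atomic-type condition; domains showing that each of the Furstenberg-side arrows ($\mathrm{F}\to\mathrm{SF}$, $\mathrm{F}\to\mathrm{AF}$, $\mathrm{SF}\to\mathrm{QF}$, $\mathrm{AF}\to\mathrm{QF}$) and each of the ``vertical'' arrows ($\mathrm{A}\to\mathrm{F}$, $\mathrm{SA}\to\mathrm{SF}$, $\mathrm{AA}\to\mathrm{AF}$, $\mathrm{QA}\to\mathrm{QF}$) is strict; a pair of domains separating semi-Furstenberg from almost-Furstenberg in both directions; and a domain that is not antimatter but not quasi-Furstenberg. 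A single well-chosen domain will typically dispatch several of these at once.

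For the constructions I would rely chiefly on semigroup rings $F[X;M]$ with $F$ a field and $M$ an additive submonoid of $\mathbb{Q}_{\ge 0}$: atoms, divisibility, and the existence of factorizations in $F[X;M]$ are governed by the additive arithmetic of $M$, so prescribing the generators of $M$ lets one dial in a desired pattern of atoms — for instance $M=\langle 1\rangle$ (atomic), $M$ generated by a set of rationals accumulating at $0$, or $M=\mathbb{Q}_{\ge 0}$ (antimatter). For the atomic non-UFD one can take the classical $\mathbb{Z}[\sqrt{-5}]$ (or the coordinate ring $\mathbb{R}[X,Y]/(X^{2}+Y^{2}-1)$ of the circle); for an antimatter domain that is not a field, $F[X;\mathbb{Q}_{\ge 0}]$ or the ring $\overline{\mathbb{Z}}$ of all algebraic integers. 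The ``Furstenberg but not atomic'' behaviour calls for a monoid in which every nonzero element has an atom divisor while the descending sequence of atom-quotients never terminates and the element in question is not a product of atoms; layering a $D+M$ (pullback) construction over such a semigroup ring supplies the extra freedom to destroy stray atomic or semi-atomic factorizations while preserving the Furstenberg-type property, and localizations of polynomial rings serve as a flexible substitute when a purely monoid-theoretic model is awkward.

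The main obstacle — and the reason the statement is offered as a conjecture rather than proved outright — lies in the cluster of separations among the \emph{weakest} conditions: quasi-atomic versus almost atomic, the two incomparable Furstenberg-side refinements, and the separations involving the newly introduced semi-atomic and semi-Furstenberg classes. There the standard bookkeeping degenerates because the candidate domains have so few atoms: one must simultaneously guarantee that \emph{some} nonzero multiple of every element is a product of atoms while producing a single element for which no multiplier that is itself a product of atoms suffices. My approach would be to adjoin a single ``generic'' atom (a variable, or a new prime element) to a rational-semigroup-ring base and arrange the value monoid so that every nonzero element has an atom divisor yet the monoid is not generated by its atoms, then argue that this last property persists under the ring extension; verifying that persistence in full generality is precisely the step I expect to resist, and the few separations for which it cannot yet be pushed through are the exceptions noted in the introduction.
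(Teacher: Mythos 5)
Your proposal follows essentially the same strategy as the paper: the completeness of Figure $1$ is reduced, via transitivity of the implications already drawn, to a short list of domains each realizing a prescribed combination of properties (the paper sharpens this to exactly six constructions), and those domains are then built from monoid algebras such as $\F_2[X]_{(X)}$ with exponents in a carefully chosen submonoid of $\Q_{\ge 0}$, from $D+M$-type rings such as $\Z + x\Q[x]$, and from iterated adjunction of fresh ``generic'' atoms. The separations you single out as resistant are precisely the two the paper also leaves open --- a quasi-atomic domain that is not almost Furstenberg and a semi-Furstenberg domain that is not almost Furstenberg --- so, like the paper, your plan establishes the conjecture only in part.
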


The properties we investigate are of two types. ``Subatomic" domains possess the property that for $\alpha$ in the ring, there exists some $\beta$ satisfying a certain set of conditions such that $\alpha\beta$ can be expressed as the product of irreducible elements. In a ``sub-Furstenberg" domain, for every element $\alpha$, there exists some $\beta$ satisfying a certain set of conditions such that $\alpha\beta$ has an irreducible divisor that does not divide $\beta$.

Given that Figure $1$ contains so many properties and arrows, it would seem as though we have to prove a lot of results. In fact, we can reduce the number of results we have to prove by establishing domains that have very specific sets of properties. For example, a semi-atomic domain that is not Furstenberg is also a semi-atomic domain that is not atomic and a semi-Furstenberg domain that is not Furstenberg. With this in mind, we reduce the proofs of Conjectures $1$ and $2$ to six constructions.

\begin{enumerate}
\item{A semi-atomic domain that is not Furstenberg.}
\item{An almost atomic domain that is not semi-Furstenberg.}
\item{A quasi-atomic domain that is not almost Furstenberg.}
\item{A semi-Furstenberg domain that is not almost Furstenberg.}
\item{A Furstenberg domain that is not quasi-atomic.}
\item{A domain that is neither quasi-Furstenberg nor antimatter.}
\end{enumerate}

Of the six statements listed above, we show all of them with the exceptions of (iii) and (iv).

\section{Generalizations of Atomicity}

For a ring $D$, $D^*$ is the ring of units. Of the definitions in this section, ``semi-atomic'' is the only one that is original to this article.

\begin{definition}
An \emph{atomic domain}, or ``factorization domain'' is an integral domain in which every non-unit element is a product of irreducible elements \cite{Co}.
\end{definition}

\begin{definition}
An integral domain $D$ is \emph{semi-atomic} if there exists some $\beta \in D$ such that for all $\alpha \in D\backslash D^*$, $\alpha\beta$ is a product of irreducible elements.
\end{definition}

\begin{definition}
An integral domain $D$ is \emph{almost atomic} if for all $\alpha \in D\backslash D^*$, there exist irreducible elements $\pi_1, \pi_2, \ldots, \pi_n \in D$ such that $\alpha\pi_1 \pi_2 \ldots \pi_n$ is a product of irreducible elements \cite{BC}.
\end{definition}

\begin{definition}
An integral domain $D$ is \emph{quasi-atomic} if for all $\alpha \in D\backslash D^*$, there exists some $\beta \in D$ such that $\alpha\beta$ is a product of irreducible elements \cite{BC}.
\end{definition}

We want to show that Figure $1$ shows us the ``subatomic" properties in the correct order. The only nontrivial part is the following result.

\begin{lemma} All semi-atomic domains are almost atomic.
\end{lemma}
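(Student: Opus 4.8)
The plan is to use the single global multiplier $\beta$ from the definition of semi-atomicity twice over: once to factor $\beta^2$ into honest irreducibles, and once more to re-expand any product of the shape $(\text{irreducible})\cdot\beta$ into irreducibles.

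First I would normalize $\beta$. If $D$ is a field there are no nonzero non-units and both semi-atomicity and almost atomicity hold vacuously, so assume $D$ is not a field. Fixing a nonzero non-unit $\alpha_0$, the product $\alpha_0\beta$ is a nonempty product of irreducibles, which forces $\beta \neq 0$ and also exhibits an irreducible $\pi \in D$. Replacing $\beta$ by $\pi\beta$ (which is still a semi-atomic witness, since $\alpha(\pi\beta) = \pi(\alpha\beta)$, and is now a non-unit), I may assume that $\beta$ is a nonzero non-unit.

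Now for the main argument. Let $\alpha \in D\setminus D^*$ be nonzero. Applying semi-atomicity to the non-unit $\beta$ gives a factorization $\beta^2 = \pi_1\cdots\pi_m$ into irreducibles with $m \geq 1$; these $\pi_i$ are exactly the irreducibles I will use to witness almost atomicity for $\alpha$, so it remains to check that $\alpha\pi_1\cdots\pi_m = \alpha\beta^2$ is a product of irreducibles. Writing $\alpha\beta = \rho_1\cdots\rho_k$ (a product of irreducibles by semi-atomicity, with $k \geq 1$ because $\alpha\beta$ is a nonzero non-unit) we get
\[
\alpha\beta^2 = (\alpha\beta)\,\beta = \rho_1\cdots\rho_{k-1}\,(\rho_k\beta),
\]
and since $\rho_k$ is irreducible and hence a non-unit, semi-atomicity applied to $\rho_k$ rewrites $\rho_k\beta$ as a product of irreducibles. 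Thus $\alpha\beta^2 = \alpha\pi_1\cdots\pi_m$ is a product of irreducibles, which is precisely the statement that $D$ is almost atomic.

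I do not anticipate a serious obstacle; the only care needed is the usual bookkeeping about zero and units — making sure that $\beta$, $\alpha\beta$, and each $\rho_i$ are genuine non-units so that semi-atomicity may be legitimately invoked, and that the products of irreducibles appearing are nonempty. The one point that is not purely mechanical is the decision to pass from $\beta$ to $\beta^2$: the definition of almost atomicity insists on multiplying $\alpha$ by honest irreducibles, while $\beta$ itself need not be irreducible or even a product of irreducibles, whereas $\beta^2$ is guaranteed to factor.
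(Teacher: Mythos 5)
Your proof is correct and takes essentially the same approach as the paper: pass from $\beta$ to $\beta^2$, which is both a product of irreducibles and a universal multiplier producing products of irreducibles. The only difference is a small detour at the end --- the paper simply notes that $\alpha\beta$ is a non-unit and applies semi-atomicity to it directly to factor $(\alpha\beta)\beta$, whereas you re-expand $\rho_k\beta$; both work, and your extra care with the unit/zero normalization of $\beta$ is harmless.
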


\begin{proof} Let $R$ be a semi-atomic domain. By definition, there exists some element $\beta$ such that $\alpha\beta$ is a product of irreducible elements for all non-unit $\alpha \in R$. If $\beta$ were a unit, then $\alpha\beta$ would only be a product of irreducible elements whenever $\alpha$ is, implying that $R$ is atomic. Suppose $\beta$ is not a unit. Then, $\beta^2$ is a product of irreducible elements because it is the product of $\beta$ and a non-unit element of $R$. If $\alpha$ is not a unit, then neither is $\alpha\beta$. So, $\alpha\beta^2 = (\alpha\beta)\beta$. Multiplying any non-unit element by $\beta^2$ produces a product of irreducible elements. Because $\beta^2$ is the product of irreducible elements, $R$ is almost atomic.
\end{proof}

At this point, we have not shown that the four properties in this section are distinct. We close this section with two simple examples showing that atomicity is stronger than almost atomicity, which is in turn stronger than quasi-atomicity.

\begin{example} We construct an almost atomic domain that is not atomic. Let $D = \Z[x] + x^2 \Q[x]$. (Another representation is $\Z + \Z x + x^2 \Q[x]$.) Let $f(x)$ be an element of $D$ in which the smallest exponent is $qx^k$ with $q \in \Q \backslash \Z$. By definition, $k > 1$. So, $f(x)$ is a multiple of every rational prime. But, the product of every rational prime is not an element of $D$. Therefore, $f(x)$ is not atomic.

Let $q = m/n$, where $m$ and $n$ are relatively prime integers. We see that $nf(x) = x^{k - 1} g(x)$, where $g(x)$ is a polynomial in which the term with the smallest exponent is $mx$. $x^{k - 1}$ is atomic because $x$ is irreducible. Let $g(x) = g_1 (x) \ldots g_k (x)$, where each $g_i (x)$ is an element of $D$. We show that $k$ cannot be arbitrarily large. If the product of non-units is $mx$, then there are at most $\omega(m) + 1$ of those non-units, where $\omega(m)$ is the number of (not necessarily distinct) prime factors of $n$. $1$ is a term of all but at most $\omega(m) + 1$ of the $g_i$'s. The rest of them have positive degree. There are at most $\textrm{deg} (g)$ of these terms. $k \leq \omega(m) + \textrm{deg} (g) + 1$. $nf(x)$ is atomic and $D$ is almost atomic.
\end{example}

\begin{example} We construct a quasi-atomic domain that is not almost atomic. Let $D = \Z[x] + x^2 \R[x]$. Let $f(x)$ be an element of $D$ in which the smallest coefficient is $rx^k$ with $r \in \R \backslash \Q$. Once again, $k > 1$ and $f(x)$ is a multiple of every rational prime. Because the rational primes are themselves irreducible in $D$, $f(x)$ is not. Hence, any element $f \in D$ in which the term with the smallest coefficient is $rx^k$ with $r \in \R \backslash \Q$ is not irreducible. Because $r$ is not a product of integers, $f(x)$ is not atomic.

We can also show that we cannot multiply $f(x)$ by any atomic element and obtain another atomic element. Let $g_1 (x), \ldots, g_n (x) \in D$ be atomic. Let the first terms of these polynomials be $a_1 x^{k_1}, \ldots, a_n x^{k_n}$. Each $a_i$ is an integer. The first term of $f(x) g_1 (x) \ldots g_n (x)$ is $ra_1 \ldots a_n$, which is still not an integer because $r$ is irrational. Thus, $f(x) g_1 (x) \ldots g_n (x)$ is not atomic. $D$ is not almost atomic.

Let $p(x)$ be any non-zero element of $D$. Let $rx^k$ be the first term of $h(x)$. Consider the product $q(x) = (x^2/r) p(x)$. The first term of $q(x)$ is $x^{k + 2}$. Therefore, $q(x) = x^{k + 1} h(x)$. The first term of $h(x)$ is $x$. Every other term has a real coefficient and an exponent greater than $1$. Therefore, $h(x) \in D$. The only constant factors of $h$ are $\pm 1$ because $x$ is the first term of $h(x)$. Let $d$ be the degree of $h$. If $h(x) = h_1 (x) \ldots h_m (x)$, where each $h_i$ is an element of $D$, then $m \leq d$ because $h$ has no nontrivial constant factors. $h(x)$ is atomic. Hence, $D$ is quasi-atomic, but not almost atomic.
\end{example}

Kaplansky's Theorem states that a domain is a UFD if and only every prime ideal contains a prime element. We provide a similar theorem for quasiatomic domains with a similar proof. \cite[Thm. $1.1.5$]{K}

\begin{theorem} Assuming Zorn's Lemma, a domain is quasiatomic if and only if every prime ideal contains an irreducible element.
\end{theorem}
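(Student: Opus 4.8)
The plan is to imitate Kaplansky's argument, with ``irreducible'' in place of ``prime'' throughout; as usual in statements of this kind I will read ``prime ideal'' as ``nonzero prime ideal'' (the zero ideal contains no irreducible element, while a field is vacuously quasiatomic, so the biconditional must exclude it). The forward direction is immediate: if $D$ is quasiatomic and $P$ is a nonzero prime ideal, pick a nonzero $\alpha \in P$. Since $P$ is proper, $\alpha$ is a non-unit, so there is $\beta \in D$ with $\alpha\beta = \pi_1 \cdots \pi_n$ a product of irreducibles. Then $\alpha\beta \in P$; as $\alpha\beta$ is a nonzero non-unit the product is nonempty, and primeness of $P$ forces some $\pi_i \in P$, which is the desired irreducible element.

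For the converse I would build a saturated multiplicative set out of the irreducibles. Let $S$ be the set of all $d \in D$ that divide some element of $D$ which is a unit or a finite product of irreducible elements. Two elementary verifications are needed: $S$ is closed under multiplication (multiply the two witnessing factorizations), and $S$ is saturated, i.e. $ab \in S$ implies $a \in S$ and $b \in S$, since a divisor of an element of $S$ again lies in $S$; note also $1 \in S$ and $0 \notin S$. The reason for this definition is the reformulation: every unit belongs to $S$, so $D$ is quasiatomic if and only if $S = D \setminus \{0\}$.

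Now suppose $D$ is not quasiatomic, so there is a nonzero $\alpha \notin S$. Saturation gives $(\alpha) \cap S = \emptyset$, for if $r\alpha \in S$ then $\alpha \in S$. Here Zorn's Lemma enters: among the ideals that contain $\alpha$ and are disjoint from $S$, choose a maximal one, $P$. Since $1 \in S$ the ideal $P$ is proper, and the standard argument (using that $S$ is multiplicatively closed) shows $P$ is prime; it is nonzero because $\alpha \in P$. By hypothesis $P$ then contains an irreducible element $\pi$. But $\pi$ divides $\pi$, a one-term product of irreducibles, so $\pi \in S$, contradicting $P \cap S = \emptyset$. Hence $D$ is quasiatomic.

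The only step that is not pure divisibility bookkeeping is the appeal to Zorn's Lemma to extract the prime ideal $P$ avoiding $S$ — which is precisely why Zorn's Lemma is assumed — together with the standard fact that an ideal maximal with respect to missing a multiplicatively closed set is prime. I expect this to be the main (and essentially the only) genuine obstacle, the closure and saturation properties of $S$ and the remaining divisibility arguments being routine.
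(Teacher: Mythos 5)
Your proposal is correct and follows essentially the same Kaplansky-style argument as the paper: both directions match, with Zorn's Lemma applied to an ideal maximal among those disjoint from the multiplicative set built from the atomic elements (you saturate that set first, the paper uses it directly, but the skeleton is identical). Your explicit handling of the zero ideal and the saturation/closure checks are minor refinements of rigor, not a different route.
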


\begin{proof} Let $R$ be a quasiatomic domain with prime ideal $P$. Let $\alpha$ be a non-zero element of $P$. Because $R$ is quasiatomic, there exists some $\beta$ such that $\alpha\beta$ is atomic. $\alpha\beta \in P$. We may write $\alpha\beta$ as $\pi_1 \ldots \pi_n$ with each $\pi_i$ irreducible. $P$ contains some $\pi_i$ because it is a prime ideal.

Let $S$ be the set of atomic elements of $R$. Let $\alpha \in R \backslash (S \cup \{0\})$. Suppose $\alpha R \cap S = \emptyset$. $\alpha R$ is contained in an ideal $P$, which is maximal with respect to $R \backslash S$. $P$ contains an irreducible $\pi$. However, $\pi R \cap S = \emptyset$. But, $\pi \in S$, which is impossible.

Thus, $\alpha R \cap S \neq \emptyset$. Then, there exists some $\beta \in R$ such that $\alpha\beta \in S$. By definition, $\alpha\beta$ is atomic. Therefore, $R$ is quasiatomic.
\end{proof}

\section{Furstenberg Domains}

In 1955, Furstenberg published his ``topological'' proof of the infinitude of primes \cite{F}. Clark recently proved that Furstenberg's proof works in any integral domain in which there are finitely many units and every non-unit element has an irreducible divisor \cite{Cl}. This result leads us to a new definition.

\begin{definition} In a \emph{Furstenberg domain}, every non-unit element has an irreducible divisor.
\end{definition}

It is clear that all atomic domains are also Furstenberg. We no longer demand that elements be factorable into irreducible elements, but merely that they have some irreducible divisor. We can modify each of the definitions in the previous section with this idea in mind. All of the following definitions are original to this paper.

Suppose an $X$-atomic domain $D$ has the property that for all non-unit $\alpha \in D$, there exists some $\beta \in X$ such that $\alpha\beta$ is a product of irreducible elements. We define an $X$-Furstenberg domain as one in which for all non-unit $\alpha$, there exists some $\beta \in X$ such that $\alpha\beta$ has an irreducible divisor $\pi$ that does not divide $\beta$. If we did not impose the condition that $\pi$ could not divide $\beta$, then the definition would be meaningless. We would simply let $\beta$ be irreducible. Then, $\beta$ would be an irreducible divisor of $\alpha\beta$. $X$-atomic implies $X$-Furstenberg.

\begin{definition} In a \emph{semi-Furstenberg domain} $D$, there exists some element $\beta \in D$ such that for any $\alpha \in D\backslash D^*$, there exists some irreducible $\pi \in D$ such that $\pi$ divides $\alpha\beta$, but not $\beta$.
\end{definition}

\begin{definition} $D$ is \emph{almost Furstenberg} if for any $\alpha \in D\backslash D^*$, there exist irreducible elements $\pi, \gamma_1, \gamma_2, \ldots, \gamma_n$ such that $\pi$ divides $\alpha\gamma_1 \gamma_2 \ldots \gamma_n$, but $\pi$ does not divide $\gamma_1 \gamma_2 \ldots \gamma_n$.
\end{definition}

\begin{definition} $D$ is \emph{quasi-Furstenberg} if for any $\alpha \in D\backslash D^*$, there exist $\beta, \pi \in D$ such that $\pi$ is irreducible, $\pi$ divides $\alpha\beta$, and $\pi$ does not divide $\alpha\beta$.
\end{definition}

The weakest condition that we can impose on an integral domain is simply that it has irreducible elements, without assuming any structure on those elements.

\begin{definition} An \emph{antimatter domain} contains no irreducible elements \cite{CDM}.
\end{definition}

\begin{example} Let $D = \F_2 [X]_{(X)}$, where $X = \{x^\alpha : \alpha \in \Q_+\}$. Every element of $D$ has the form $x^\alpha u$, where $\alpha$ is a non-negative rational number and $u$ is a unit. $D$ has no irreducible elements because $x^\alpha = x^\beta x^{\alpha - \beta}$ for all $\beta < \alpha$. Hence, $D$ is an antimatter domain.
\end{example}

\begin{example} We present a domain that is both semi-Furstenberg and almost Furstenberg, but not Furstenberg. Let $D = \bigcup_{n = 1}^\infty (\Z\llbracket x^{1/n} \rrbracket + x^{1 + (1/n)} \Q\llbracket x^{1/n} \rrbracket)$. $D$ is the ring of power series with rational coefficients in which the coefficient of $x^q$ is an integer when $q \leq 1$ and the exponents have bounded denominator.

The irreducible elements of $D$ are precisely those in which the constant term is a rational prime. Note that $x^q$ is never irreducible because $x^q = (x^{q/2})^2$. $x$ is not a multiple of any irreducible element. Consider $xf(x)$, where $f$ is a non-unit element of $D$ with constant term $c(f)$. Let $q$ be the smallest exponent of $f$. If $q$ is positive, then the smallest exponent of $xf(x)$ is $1 + q > 1$. Therefore, $f(x)$ is a multiple of every rational prime, even though $x$ is not a multiple of any rational prime. Suppose $q = 0$. Then, $f(x)$ already has a rational prime divisor because $c(f)$ has such a divisor. $xf(x)$ does as well. Thus, $D$ is semi-Furstenberg.

Because $f$ is not a unit, $c(f) \neq \pm 1$. So, $c(f)$ has a prime factor $p$. Consider $(p + x)f(x)$. $p$ clearly divides $pf(x)$. The first term of $xf(x)$ is $c(f)x$. Every other term has an exponent greater than $1$. Therefore, $p$ divides $xf(x)$ as well. Thus, $p \mid (p + x)f(x)$, even though $p$ does not divide the irreducible element $p + x$. Hence, $D$ is almost Furstenberg.
\end{example}

\section{A Semi-Atomic Domain That Is Not Furstenberg}

Because there are countably many primes, we may associate every pair of positive integers with a distinct odd prime. Let $p_{n, m}$ be the odd prime associated to the pair $(n, m)$ with $m$ odd. Let $\alpha$ be any positive irrational number. Finally, let $S$ be the additively closed set generated by the following elements.
\begin{enumerate}
\item{$\alpha$.}
\item{$1/2^n$ with $n \in \Z$.}
\item{$p_{n, m}^{-1} (\alpha + (m/2^n))$ with $m, n \in \Z_+$ and $m$ odd.}
\end{enumerate}

Let $D = \F_2 [X]_{(X)}$ with $X = \{x^\alpha : \alpha \in S\}$. $D$ is not Furstenberg because $x^{m/2^n}$ has no irreducible divisors for any $m, n \in \Z_+$. We now show that it is semi-atomic.

\begin{lemma} Every element of $D$ has the form $x^q f(x)$ where $q$ is a non-negative rational number and $f(x)$ is atomic.
\end{lemma}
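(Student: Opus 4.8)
The plan is to exploit the order valuation on $D$ together with the arithmetic of the value set $S$. For $0 \ne G \in D$ let $\operatorname{ord}(G)$ be the least exponent occurring in $G$; since every denominator arising in $D$ has nonzero constant term, $\operatorname{ord}$ is a well-defined additive map $D \setminus \{0\} \to S$ that vanishes exactly on the units, and for $G \in \F_2[S]$ one has $x^s \mid G$ precisely when $s' - s \in S$ for every exponent $s'$ of $G$; in particular $x^{\operatorname{ord}(G)} \mid G$ if and only if $G$ is an associate of the monomial $x^{\operatorname{ord}(G)}$. I would also reduce at once to $G \in \F_2[S]$: clearing denominators multiplies $G$ by a unit, and the conclusion ``$G = x^q f$ with $q$ a non-negative rational and $f$ atomic'' is invariant under multiplication by units, so write $F = \sum_{i=1}^r x^{s_i}$ with $0 \le s_1 < \dots < s_r$ in $S$.

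Next I would record, using that $1$ and $\alpha$ are $\Q$-linearly independent, that $x^s$ is irreducible iff $s$ is an atom of $(S,+)$, and that $\alpha$ together with each $\gamma = p_{n,m}^{-1}(\alpha + m/2^n)$ are atoms (neither can be split inside $S$); hence $x^s$ is atomic whenever $s$ can be written as a non-negative integer combination of $\alpha$ and the $\gamma$'s with \emph{no} dyadic summand, because then $x^s$ is literally a product of these irreducible monomials. By contrast the obstruction to atomicity lives entirely in the dyadic directions, as witnessed by the fact (noted above the Lemma) that $x^{m/2^n}$ has no irreducible divisor at all.

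Now let $q$ be the largest non-negative dyadic rational with $x^q \mid F$, i.e.\ with $s_i - q \in S$ for all $i$; the set of such $q$ is nonempty and downward closed, and since $S$-membership of $s_i - q$ forces its $\alpha$-part and then only requires its dyadic part to be non-negative, an analysis of the finitely many admissible $\alpha$-parts shows the supremum is attained. Put $f = x^{-q}F \in \F_2[S]$, so $F = x^q f$ with $q$ a non-negative rational. Let $x^w$ be a maximal monomial dividing $f$ (such $w$ exists by Zorn's Lemma, or directly from the bounded complexity of the exponents of $f$). Maximality of $q$ forces that no positive dyadic rational divides $w$, so every representation of $w$ has zero dyadic summand and hence $x^w$ is atomic; and $f' := x^{-w} f$ has, by maximality of $w$, no nontrivial monomial factor. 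Since $f = x^w f'$, it remains only to prove the following claim.

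\emph{Claim: a polynomial $f' \in \F_2[S]$ with no nontrivial monomial factor is atomic.} I would establish this by bounding the length of any factorization $f' = g_1 \cdots g_k$ into non-units. Each $g_j$ has zero constant term, and no $g_j$ is an associate of a monomial --- otherwise $g_j \sim x^{\operatorname{ord}(g_j)}$ with $\operatorname{ord}(g_j) > 0$, giving $f'$ a nontrivial monomial factor --- so some difference of consecutive exponents of $g_j$ fails to lie in $S$. Additivity of $\operatorname{ord}$ gives $\sum_j \operatorname{ord}(g_j) = \operatorname{ord}(f')$, and, passing to polynomial associates, a similar accounting controls the top degrees; the goal is to convert the ``genuine gaps'' carried by the $g_j$ into a positive lower bound for $\operatorname{topdeg}(g_j) - \operatorname{ord}(g_j)$ valid for every factor of the fixed polynomial $f'$, whence $k$ is bounded and a longest factorization consists of irreducibles. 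The step I expect to be the main obstacle is precisely this lower bound: $\operatorname{ord}(f')$ may itself be divisible by arbitrarily small dyadic rationals (e.g.\ $f' = x^{1/2} + x^\alpha$ has $\operatorname{ord} = 1/2$ yet no monomial factor and is irreducible), so one cannot bound $k$ merely by counting factorizations of $\operatorname{ord}(f')$ inside $S$; one must instead show that only finitely many splittings of $\operatorname{ord}(f')$ are \emph{realized} by an honest factorization, which comes down to controlling the $\F_2$-cancellation among the low-degree terms of $\prod_j g_j$.
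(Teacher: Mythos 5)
Your reduction up to $F = x^q f$ is essentially the paper's: your ``largest non-negative dyadic rational $q$ with $x^q \mid F$'' coincides with the paper's $\min_i q_i$, where $q_i$ is the largest rational subtractable from the exponent $b_i$, and the further extraction of the atomic monomial $x^w$ is harmless. But your proof then rests entirely on the Claim that a polynomial with no nontrivial monomial factor is atomic, and you do not prove it: you flag the needed bound as the main obstacle and only gesture at controlling the $\F_2$-cancellation among low-degree terms. Moreover, the route you sketch for the Claim --- bounding the number of factors through $\operatorname{ord}$ and the top degree --- is precisely the one your own example $x^{1/2} + x^{\alpha}$ shows cannot work: $\operatorname{ord}(f')$ may be a positive dyadic rational and hence a sum of arbitrarily many positive elements of $S$, so no count of splittings of $\operatorname{ord}(f')$ (or of $\operatorname{topdeg}(f')$) can bound the length of a factorization. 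As written, the argument does not close.

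The missing idea is the one that justifies the paper's final sentence ``Hence, $f(x)$ is atomic.'' By maximality of $q_i$ at the minimizing index $i$, the exponent $b_i - q_i$ of $f$ lies in the subsemigroup generated by $\alpha$ and the $p_{n,m}^{-1}(\alpha + (m/2^n))$ alone, with no dyadic summand. Now use the fact that if $f = g_1 \cdots g_k$ with each $g_j$ a non-unit (hence, after normalizing by units, a polynomial over $\F_2$ with zero constant term), then \emph{every} surviving exponent of the product --- not only $\operatorname{ord}$ and the top degree --- is a sum $s_1 + \cdots + s_k$ with each $s_j$ a positive exponent of $g_j$, so a positive element of $S$. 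Apply this to the distinguished exponent $b_i - q_i$: the independence argument in the paper's first paragraph ($\Q$-linear independence of $1$ and $\alpha$, plus the fact that $1/p_{n,m}$ cannot be assembled from other prime reciprocals) forces any decomposition of $b_i - q_i$ into elements of $S$ to use no dyadic summands and hence to have at most $c_0 + \sum c_{n,m}$ parts, where $b_i - q_i = c_0 \alpha + \sum c_{n,m} p_{n,m}^{-1}(\alpha + (m/2^n))$. This bounds $k$ uniformly, and a factorization of maximal length consists of irreducibles, so $f$ is atomic. The same observation proves your Claim (a polynomial with no nontrivial monomial factor automatically possesses such a rigid exponent), but by an argument quite different from the one you propose.
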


\begin{proof} Suppose $r = p_{n, m}^{-1} (\alpha + (m/2^n))$ was a sum of smaller elements of $S$. $r - \alpha \notin S$ because the coefficient of $\alpha$ is negative. If $q \in \Q_+$, then $r - q \notin S$ because we cannot express $1/p_{n, m}$ as a sum of any other reciprocals of primes. Finally, we cannot subtract any other elements from $(iii)$ because the coefficient of $\alpha$ would be a fraction in which the denominator has multiple prime factors that can only be generated by the primes we used. We cannot express $r$ as a sum of more than one element of $S$.

Let $g(x) = x^{b_1} + \ldots + x^{b_n} \in D$ where each $b_i$ is an element of $S \cup \{0\}$. Let $q_i$ be the largest rational number for which $b_i - q_i \in S$. Such an element exists for $b_i$ because it exists for each generator of $S$. Let $q = \min(q_1, \ldots, q_n)$. Then, $g(x) = x^q f(x)$. One of the exponents of $f(x)$ is $b_i - q_i$. $x^{b_i - q_i}$ is atomic because $b_i - q_i$ is the sum of a multiple of $\alpha$ and various terms of the form $p_{n, m}^{-1} (\alpha + (m/2^n))$, all of which cannot be expressed as a sum of smaller terms. Hence, $f(x)$ is atomic.
\end{proof}

\begin{corollary} $D$ is semi-atomic.
\end{corollary}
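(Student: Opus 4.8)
The plan is to exhibit a single element $\beta\in D$ and verify, via the Lemma, that $\gamma\beta$ is a product of irreducibles for every nonzero non-unit $\gamma\in D$. I claim $\beta=x^{2\alpha}$ works.

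I would begin by recalling the monoid structure of $D$. Since $S\subseteq\R_{>0}$ is additively closed and totally ordered, every nonzero element of $D=\F_2[X]_{(X)}$ has the form $x^r u$ with $r\in S\cup\{0\}$ and $u$ a unit (a unit precisely when $r=0$); consequently the irreducible elements of $D$ are exactly the $x^a u$ with $a$ an \emph{atom} of the additive monoid $S\cup\{0\}$ (an element that is not a sum of two positive elements of $S$), and the atomic elements of $D$ are exactly the $x^t u$ with $t$ a finite sum of atoms and $u$ a unit. The first paragraph of the proof of the Lemma shows that each $p_{n,m}^{-1}(\alpha+m/2^n)$ is an atom, and a one-line look at the coefficient of $\alpha$ shows that $\alpha$ is an atom. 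The point I will use is that for every odd $m\geq 1$ and every $n\geq 1$ the element $\alpha+m/2^n=p_{n,m}\cdot p_{n,m}^{-1}(\alpha+m/2^n)$ is a sum of $p_{n,m}$ atoms, so $x^{\alpha+m/2^n}$ is atomic.

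Next I would prove an elementary arithmetic fact: every positive dyadic rational $q$ can be written as $q=m_1/2^{n_1}+m_2/2^{n_2}$ with $m_1,m_2$ odd positive integers and $n_1,n_2\geq 1$. (Write $q=c/2^n$ with $n\geq 1$ and $c\geq 1$ an integer; take $q=1/2^n+(c-1)/2^n$ if $c$ is even, and $q=1/2^{n+1}+(2c-1)/2^{n+1}$ if $c$ is odd.) Together with the previous point this shows that $x^{2\alpha+q}$ is atomic for every dyadic $q\geq 0$, the case $q=0$ being $2\alpha=\alpha+\alpha$. The factor $2$ cannot be dropped: $x^{\alpha+1}$ is not atomic, so $\beta=x^{\alpha}$ would not work.

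Finally I would assemble the argument. Since $2\alpha\in S$ we have $\beta:=x^{2\alpha}\in D$. Given a nonzero non-unit $\gamma\in D$, the Lemma writes $\gamma=x^q g(x)$ with $q\geq 0$ rational and $g$ atomic; the number $q$ produced in its proof is in fact a non-negative dyadic rational, and writing $g=x^t v$ with $v$ a unit and $t$ a finite sum of atoms we get $\gamma\beta=x^{(2\alpha+q)+t}v$. Since $2\alpha+q$ is a sum of atoms and $t$ is a sum of atoms, $(2\alpha+q)+t$ is a sum of atoms, so $\gamma\beta$ is a product of irreducibles; hence $D$ is semi-atomic. The main obstacle is really the choice of $\beta$: one has to notice that keeping two copies of $\alpha$ lets a full prime's worth of copies of $p_{n,m}^{-1}(\alpha+m/2^n)$ absorb the dyadic exponent $m/2^n$, and that two such absorptions suffice to swallow an arbitrary positive dyadic rational; everything after that is routine bookkeeping.
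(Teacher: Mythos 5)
Your overall strategy is the same as the paper's: exhibit a single monomial $\beta$, invoke the Lemma to write an arbitrary non\mbox{-}unit as $x^{q}f(x)$ with $f$ atomic, and then check that the monomial $x^{(\text{exponent of }\beta)+q}$ is a product of irreducibles. The difference is the choice of $\beta$: the paper takes $\beta = x^{\alpha}$ and absorbs $q = m/2^{n}$ via $\alpha + m/2^{n} = p_{n,m}\cdot p_{n,m}^{-1}(\alpha + m/2^{n})$, while you take $\beta = x^{2\alpha}$ and split $q$ into two dyadic pieces each of the form $m_i/2^{n_i}$ with $n_i \geq 1$. Your choice is a genuine improvement: since the generators $1/2^{n}$ range over $n \in \Z$, the positive integers lie in $S$, so $q$ can be a positive integer (take $\gamma = x$ or $x^{2}$), in which case $p_{n,m}$ is undefined ($n$ must be a positive integer) and, as you observe, $\alpha + q$ is then not a sum of atoms of $S$; the paper's proof silently skips this case, and your $\beta = x^{2\alpha}$ together with the two\mbox{-}piece decomposition of $q$ closes it.

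There is, however, one step that is false as written: the claim that every nonzero element of $D$ has the form $x^{r}u$ with $u$ a unit (and hence that every atomic element is $x^{t}u$ with $t$ a sum of atoms). That holds in a domain such as the one in Example 8, where the exponent monoid is all of $\Q_{+}$, but here $S$ is not closed under subtracting smaller elements: for instance $x^{1/2} + x^{\alpha}$ is not a monomial times a unit, since neither $\alpha - 1/2$ nor $1/2 - \alpha$ lies in $S$. You use this claim in the final assembly when you write the atomic factor $g$ as $x^{t}v$. Fortunately the detour is unnecessary: the Lemma already gives $g$ as a product of irreducibles, so once you know $x^{2\alpha + q}$ is a product of irreducible monomials (which your atom computation does establish, granting that $x^{a}$ is irreducible when $a$ is an atom of $S$), the product $\gamma\beta = x^{2\alpha+q}\cdot g$ is a product of irreducibles and you are done. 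Excising the false structural claim therefore shortens rather than damages the argument. One last shared gap: both you and the paper assert without proof that the exponent $q$ produced by the Lemma is a dyadic rational; this is true but deserves a line, since the rational parts of elements of $S$ need not themselves be dyadic.
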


\begin{proof} We show that $x^\alpha g(x)$ is atomic for any $g(x) \in D$. We already established that $x^\alpha$ is irreducible. If $g(x)$ is atomic, then so is $x^\alpha g(x)$. Suppose $g(x)$ is not atomic. Then, it has the form $x^q f(x)$ for some $q \in \Q_+$ and atomic $f(x)$. Let $q = m/2^n$ with $m$ odd.
\[x^\alpha x^q = x^{\alpha + (m/n)} = (x^{p_{n, m}^{-1} (\alpha + (m/n))})^{p_{n, m}}\]

In this case, $x^\alpha g(x) = x^{\alpha + q} f(x)$ is atomic.
\end{proof}

\section{An Almost Atomic Domain That Is Not Semi-Furstenberg}

In this construction, each $x_n$, $y_n$, and $z_\gamma$ is an indeterminate. Let $R_0 = \F_2[Y_0]$ with $Y_0 = \{y_0^\alpha : \alpha \in \Q_+\}$. For each non-negative integer $n$, we define $R_n$ as follows.
\[R_{n + 1} = R_n [z_\gamma]\left[\frac{x_{n + 1} \gamma}{z_\gamma}\right]_{\gamma \in \Gamma_n} [Y_{n + 1}]\]
\[Y_n = \{y_n^\alpha : \alpha \in \Q_+\}\]
\[\Gamma_n = \{\alpha \in R_n : \alpha\ \textrm{not atomic in}\ R_n\}\]
\[D = \bigcup_{n = 0}^\infty R_n\]

We prove that $D$ is almost atomic, but not semi-Furstenberg using a few lemmas.

\begin{lemma} $U(R_n) = U(R_{n - 1})$.
\end{lemma}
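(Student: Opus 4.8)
The plan is to place a $\Z$-grading on a field containing $R_n$ so that $R_n$ becomes a non-negatively graded domain whose degree-zero component differs from $R_{n-1}$ only by polynomial indeterminates and a monoid-algebra factor. A standard leading-term argument then confines every unit of $R_n$ to that degree-zero component, which introduces no units beyond those of $R_{n-1}$; the reverse inclusion $U(R_{n-1})\subseteq U(R_n)$ is immediate since $R_{n-1}\subseteq R_n$.

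In detail, recall $R_n = R_{n-1}[z_\gamma]\bigl[\tfrac{x_n\gamma}{z_\gamma}\bigr]_{\gamma\in\Gamma_{n-1}}[Y_n]$, which I read as: let $P := R_{n-1}[x_n,\{z_\gamma\}_{\gamma\in\Gamma_{n-1}},Y_n]$, the polynomial ring in the algebraically independent indeterminates $x_n$ and $z_\gamma$ over the monoid algebra $R_{n-1}[Y_n] = R_{n-1}[\Q_{\geq 0}]$ in $y_n$; set $w_\gamma := x_n\gamma/z_\gamma \in \mathrm{Frac}(P)$ (so $z_\gamma w_\gamma = x_n\gamma$); and let $R_n$ be the $P$-subalgebra of $\mathrm{Frac}(P)$ generated by the $w_\gamma$. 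Since $R_{n-1}$ is a domain so is $P$, and $K := \mathrm{Frac}(P)$ inherits a $\Z$-grading from the grading of $P$ by $x_n$-degree, in which $x_n$ has degree $1$ while every element of $R_{n-1}$, every $z_\gamma$ and every $y_n^\alpha$ has degree $0$. In $K$ each $w_\gamma = x_n\gamma/z_\gamma$ is homogeneous of degree $1$, so $R_n$ is a graded subring of $K$ generated over $R_{n-1}$ in degrees $0$ and $1$; hence $R_n = \bigoplus_{d\geq 0}(R_n)_d$ is non-negatively graded.

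Writing a general element of $R_n$ as a $P$-polynomial $\sum_{\mathbf e}c_{\mathbf e}\mathbf w^{\mathbf e}$ in the $w_\gamma$, and using that $\mathbf w^{\mathbf e}$ is homogeneous of degree $|\mathbf e|\geq 0$ while every $c_{\mathbf e}\in P$ has only non-negative degrees, one sees that the degree-zero component of $R_n$ equals the degree-zero component of $P$, namely $(R_n)_0 = R_{n-1}[\{z_\gamma\}_{\gamma\in\Gamma_{n-1}},Y_n]$. This ring is built from the domain $R_{n-1}$ by adjoining polynomial indeterminates and the monoid algebra of $(\Q_{\geq 0},+)$; both operations preserve ``integral domain'' and add no units over a domain (a unit meets only finitely many $z_\gamma$, and $\Q_{\geq 0}$ has trivial unit group), so $U\bigl((R_n)_0\bigr) = U(R_{n-1})$. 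Now if $u\in R_n$ is a unit with inverse $v$, write $u = u_0+\cdots+u_a$ and $v = v_0+\cdots+v_b$ in homogeneous components with $u_a,v_b\neq 0$ and $a,b\geq 0$; since $R_n$ is a domain, the degree-$(a+b)$ component $u_av_b$ of $uv = 1$ is nonzero, so $a+b = 0$ and hence $a = b = 0$, whence $u\in(R_n)_0$ and $u\in U(R_{n-1})$. Together with $U(R_{n-1})\subseteq U(R_n)$ this gives $U(R_n) = U(R_{n-1})$; iterating from $U(R_0) = \{1\}$ also shows $U(R_n) = \{1\}$ for all $n$.

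The only step I expect to require care is the bookkeeping around the presentation of $R_n$: one must verify that $x_n$ and the $z_\gamma$ are genuinely algebraically independent indeterminates over $R_{n-1}[Y_n]$ and that adjoining the $w_\gamma$ imposes no relations beyond $z_\gamma w_\gamma = x_n\gamma$ — equivalently, that $R_n$ really is the $P$-subalgebra of $\mathrm{Frac}(P)$ generated by the $w_\gamma$ — since the grading, and hence the whole argument, rests on this. Once the presentation is pinned down the grading is forced, and the leading-term and monoid-algebra facts are routine, so I anticipate no genuinely hard step.
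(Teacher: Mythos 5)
Your proof is correct, and while it rests on the same underlying mechanism as the paper's --- a leading-term/degree argument showing the new generators cannot appear in a unit --- you organize it differently, and in a way that is actually more robust. The paper proceeds one adjunction at a time: fix $\gamma$, adjoin $z_\gamma$, compare top coefficients of $uv=1$, then assert that ``a similar argument'' handles $x_n\gamma/z_\gamma$ and $Y_n$, treating each new generator as an independent indeterminate and dismissing the infinitude of $\Gamma_{n-1}$ on the grounds that any one element involves only finitely many of them. That last step glosses over a real point: the $w_\gamma = x_n\gamma/z_\gamma$ are \emph{not} jointly algebraically independent over $R_{n-1}[\{z_\gamma\}]$, since $\gamma'\,z_\gamma w_\gamma = \gamma\,z_{\gamma'}w_{\gamma'}$ for distinct $\gamma,\gamma'$, so the iterated ``fresh indeterminate'' argument does not literally apply to the $w_\gamma$'s collectively. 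Your single $\Z$-grading by $x_n$-degree sidesteps this entirely: the cross-relations are homogeneous, every $w_\gamma$ sits in degree $1$ regardless of relations, so units are forced into the degree-zero piece $R_{n-1}[\{z_\gamma\},Y_n]$, which is handled by the standard polynomial-ring and monoid-algebra facts. The price you pay, as you note yourself, is having to pin down the presentation of $R_n$ inside $\mathrm{Frac}(P)$ (and, strictly, the grading lives on the localization $P[\{z_\gamma^{-1}\}]$ rather than on all of $\mathrm{Frac}(P)$, a harmless adjustment since $R_n$ sits inside that localization); the benefit is a uniform argument that also yields $U(R_n)=\{1\}$ for all $n$ and closes the gap in the paper's treatment of the $w_\gamma$'s.
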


\begin{proof} Fir a specific $\gamma \in \Gamma_{n - 1}$. Let $u \in U(R_{n - 1} [z_\gamma])$. There exist $a_0, \ldots, a_m$ with the following property.
\[u = a_0 + a_1 z_\gamma + \ldots + a_m z_\gamma^m\]

Because $u$ is a unit, it has an inverse $v$.
\[v = b_0 + b_1 z_\gamma + \ldots + b_k z_\gamma^k\]

$uv = 1 \in R_{n - 1}$. Therefore, $u = a_0$ and $v = b_0$. We see that $u, v \in U(R_{n - 1})$. A similar argument occurs when we adjoin $x_n \gamma/z_\gamma$ in that neither $u$ and $v$ can contain powers of $x_n$. It is not a problem that $\Gamma_{n - 1}$ may be infinite because a polynomial can only contain finitely many indeterminates. A similar process occurs when we adjoin $Y_n$.
\end{proof}

\begin{lemma} $\textrm{Irr} (R_{n - 1}) \subset \textrm{Irr} (R_n)$.
\end{lemma}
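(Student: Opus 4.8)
The plan is to establish that the passage from $R_{n-1}$ to $R_n$ — which adjoins the indeterminate $x_n$, the indeterminates $z_\gamma$ for $\gamma\in\Gamma_{n-1}$, the fractions $w_\gamma := x_n\gamma/z_\gamma$, and then the elements of $Y_n$ — preserves the irreducibility of each element of $R_{n-1}$. Fix $\pi\in R_{n-1}$ that is irreducible in $R_{n-1}$. By the previous lemma $U(R_n)=U(R_{n-1})$, so $\pi$ is a non-unit of $R_n$, hence of every subring of $R_n$ containing it; it therefore remains only to show that every factorization $\pi=ab$ in $R_n$ has a factor in $U(R_n)$. Since $a$ and $b$ are finite expressions and adjoining $Y_n$ yields the directed union over $N$ of the polynomial rings obtained by adjoining $y_n^{1/N!}$, the elements $a,b$ lie in a subring $A'$ of $R_n$ of the form $A[x_n,z_{\gamma_1},\dots,z_{\gamma_r},w_{\gamma_1},\dots,w_{\gamma_r}]$, where $A := R_{n-1}[t]$ for one indeterminate $t=y_n^{1/N!}$ and $\gamma_1,\dots,\gamma_r\in\Gamma_{n-1}$ are nonzero (a vanishing $\gamma$ contributes $w_\gamma=0$ and an ordinary indeterminate $z_\gamma$, which only helps). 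As $\pi$ remains irreducible in the polynomial ring $R_{n-1}[t]=A$, it suffices to prove: \emph{if $A$ is a domain, $\gamma_1,\dots,\gamma_r\in A\setminus\{0\}$, $x,z_1,\dots,z_r$ are indeterminates, $w_i:=x\gamma_i/z_i$, and $\rho\in A$ is irreducible in $A$, then every factorization $\rho=ab$ in $A':=A[x,z_1,\dots,z_r,w_1,\dots,w_r]$ has a factor in $U(A')$.}

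To prove the claim, view $A'$ as a subring of the Laurent ring $L:=A[x][z_1^{\pm1},\dots,z_r^{\pm1}]$, which is a domain (hence so is $A'$); on $L$ the bottom $z_i$-degree $v_i$, the top $z_i$-degree $\delta_i$, and the $x$-degree $\deg_x$ are each additive on products, with $v_i\le\delta_i$ and $\deg_x\ge0$ on $L\setminus\{0\}$. The key structural fact is a positivity constraint on $A'$ inside $L$: since $w_i^{\,j}=\gamma_i^{\,j}x^{j}z_i^{-j}$ with $\gamma_i^{\,j}\in A$, every monomial $x^{a}z_1^{m_1}\cdots z_r^{m_r}$ (with nonzero $A$-coefficient) occurring in an element of $A'$ satisfies $a\ge\sum_i\max(0,-m_i)$. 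Now suppose $\rho=ab$ with $a,b\in A'$. As $\rho\in A$, all of $v_i(\rho),\delta_i(\rho),\deg_x(\rho)$ vanish, so $v_i(a)+v_i(b)=\delta_i(a)+\delta_i(b)=\deg_x(a)+\deg_x(b)=0$ for every $i$; with $v_i\le\delta_i$ and $\deg_x\ge0$ this squeezes $v_i(a)=\delta_i(a)$, $v_i(b)=\delta_i(b)$ and $\deg_x(a)=\deg_x(b)=0$, so $a=p\,z_1^{k_1}\cdots z_r^{k_r}$ and $b=q\,z_1^{-k_1}\cdots z_r^{-k_r}$ with $p,q\in A$ and $k_i\in\mathbb{Z}$. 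Then $\rho=pq$ in $A$, so — $\rho$ being irreducible in $A$ — one of $p,q$, say $p$, lies in $U(A)$. Finally the positivity constraint applied to the monomial $a=p\,z_1^{k_1}\cdots z_r^{k_r}$ (of $x$-exponent $0$) gives $0\ge\sum_i\max(0,-k_i)$, so each $k_i\ge0$; applied to $b=q\,z_1^{-k_1}\cdots z_r^{-k_r}$ it gives $0\ge\sum_i\max(0,k_i)=\sum_i k_i$, forcing every $k_i=0$. Hence $a=p\in U(A)\subseteq U(A')$, as required; since $A'\subseteq R_n$, this yields $\pi\in\textrm{Irr}(R_n)$.

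I expect the genuine obstacle to be exactly this claim, and within it the fact that a single indeterminate $x=x_n$ is shared among all the fractions $w_\gamma=x\gamma/z_\gamma$, so the $\gamma$'s cannot be handled one at a time: each relation $z_\gamma w_\gamma=x\gamma$ ties the $z_\gamma$-degrees to the common $x$-degree. The way through is to embed into the Laurent ring $L$ and use all the $v_i$, $\delta_i$ and $\deg_x$ simultaneously to reduce $a$ and $b$ to $A$-coefficient monomials in the $z_i$'s, after which the positivity constraint on the exponents of $A'$ removes the leftover monomial twist. The other ingredients are routine: additivity of the degree functions on $L$; the identification of $A'$ as exactly the subring of $L$ generated by $A,x$, the $z_i$ and the $w_i$ (so the positivity constraint governs all of $A'$); the standard persistence of irreducibility under $A\mapsto A[t]$; and, already supplied by the previous lemma, the fact that none of the adjunctions enlarges the unit group.
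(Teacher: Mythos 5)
Your proof is correct, and it takes a genuinely more substantial route than the paper's. The paper disposes of the lemma in two sentences: if $\alpha = \beta\tau$ with a factor in $R_n \setminus R_{n-1}$, then (it asserts) $\beta\tau$ would ``contain a term with an $x_n$, $y_n$, or $z_\gamma$,'' contradiction. That assertion is precisely the point needing justification, since the adjoined fractions $w_\gamma = x_n\gamma/z_\gamma$ make it non-obvious that the new indeterminates cannot cancel in a product --- e.g.\ $z_\gamma \cdot w_\gamma = x_n\gamma$ has no $z_\gamma$ left in it --- and you correctly identify this, together with the fact that the single shared variable $x_n$ couples all the $z_\gamma$-degrees, as the real obstacle. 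Your resolution (reduce to a finitely generated subring $A'$, embed it in the Laurent ring $A[x][z_1^{\pm 1},\dots,z_r^{\pm 1}]$, use additivity of $v_i$, $\delta_i$, $\deg_x$ to squeeze both factors into the form $p\,z^{k}$ and $q\,z^{-k}$ with $p,q \in A$, then invoke the positivity constraint $a \ge \sum_i \max(0,-m_i)$ on monomials of $A'$ to force $k = 0$) is a complete argument; the verification that every monomial of $A'$ satisfies the positivity constraint, the reduction to finitely many generators (needed since $\Gamma_{n-1}$ and $Y_n$ are infinite), and the appeals to $U(R_n) = U(R_{n-1})$ and to persistence of irreducibility under $A \mapsto A[t]$ are all sound. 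What the paper's version buys is brevity; what yours buys is an actual proof of the cancellation claim that the paper leaves as an unargued assertion.
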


\begin{proof} Let $\alpha \in \textrm{Irr} (R_{n - 1})$. Suppose $\alpha = \beta \tau$ with $\beta, \tau \in R_n$. If $\beta$ and $\tau$ are both elements in $R_{n - 1}$, then one of them must be an element of $U(R_{n - 1})$ and therefore $U(R_n)$. Thus, $\beta$ or $\tau$ is an element of $R_n \backslash R_{n - 1}$. This is impossible as it would imply that $\beta\tau$ contains a term with an $x_n$, $y_n$, or $z_\gamma$.
\end{proof}

\begin{lemma} Let $\gamma \in R_{n - 1}$. Then, $\gamma x_n$ is atomic in $D$.
\end{lemma}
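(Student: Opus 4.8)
The plan is to reduce to the case $\gamma\in\Gamma_{n-1}$ — that is, $\gamma$ a non-unit of $R_{n-1}$ that is not atomic there — and then to exhibit an explicit factorization of $\gamma x_n$ into two irreducibles. If $\gamma$ is a unit the assertion is vacuous or trivial, and if $\gamma$ is atomic in $R_{n-1}$ then, whenever $\gamma x_n$ lies in $D$ at all, a factorization $\gamma=\pi_1\cdots\pi_s$ into irreducibles of $R_{n-1}$ is still a factorization into irreducibles of $D$ by iterating the lemma $\textrm{Irr}(R_{m-1})\subseteq\textrm{Irr}(R_m)$, so $\gamma x_n$ is again visibly a product of irreducibles. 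So assume $\gamma\in\Gamma_{n-1}$. Then by construction $z_\gamma\in R_n$ and $w:=x_n\gamma/z_\gamma\in R_n$, and
\[\gamma x_n=z_\gamma\cdot w.\]
It therefore suffices to prove that $z_\gamma$ and $w$ are each irreducible in $R_n$: the lemma $\textrm{Irr}(R_{m-1})\subseteq\textrm{Irr}(R_m)$, iterated upward from $R_n$ through $D=\bigcup_m R_m$, together with the lemma $U(R_m)=U(R_{m-1})$ (so that $U(D)=U(R_0)$ and no new element becomes a unit or a non-unit), then shows that $z_\gamma$ and $w$ are irreducible in $D$, whence $\gamma x_n$ is a product of two irreducibles of $D$ and is atomic in $D$.

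For the two irreducibility claims I would work with a multigrading. Regard $R_n$ as a subring of the domain $R_{n-1}\bigl[x_n,\{z_\delta^{\pm1}\}_{\delta\in\Gamma_{n-1}},Y_n\bigr]$, graded by exponents: one copy of $\Z$ for the exponent of $x_n$, one for the exponent of each $z_\delta$, and one copy of $\Q$ for the total exponent occurring among the $y_n^q$. This grading group is torsion-free, and $R_n$ is generated by homogeneous elements ($R_{n-1}$ sits in degree $0$, and $z_\delta$, $w_\delta=\delta x_n z_\delta^{-1}$, $y_n^q$ are each homogeneous), so $R_n$ is a graded subring and any factor of a homogeneous element of the domain $R_n$ is again homogeneous. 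Two structural facts drive the argument: (a) in each monomial of $R_n$ the exponent of $x_n$ is $\ge 0$, and the exponent of each $z_\delta$ is $\ge-(\text{exponent of }x_n)$, because the only generator carrying a negative $z_\delta$-exponent is $w_\delta$, which carries $x_n$-exponent $+1$; and (b) the $x_n$-degree-$0$ component of $R_n$ equals $T:=R_{n-1}[\{z_\delta\}_\delta,Y_n]$.

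Given (a) and (b) the two claims are short. For $z_\gamma$: in any factorization $z_\gamma=ab$ the homogeneous factors $a,b$ have $x_n$-degree $0$ (nonnegative and summing to $0$), so $a,b\in T$; but $T$ is a polynomial ring in the single variable $z_\gamma$ over the domain $R_{n-1}[\{z_\delta:\delta\ne\gamma\},Y_n]$, where $z_\gamma$ is prime, so one of $a,b$ is a unit of $T$, hence of $R_{n-1}$ (the adjoined $z_\delta$ and $y_n^q$ contribute no units), hence of $R_n$. For $w$: in a factorization $w=ab$ one factor, say $a$, is homogeneous of $x_n$-degree $0$, so $a\in T$; comparing $z_\delta$-degrees against (a) forces $a$ to have $z_\delta$-degree $0$ for every $\delta$ (if $a$ had a positive $z_\delta$-exponent then $b=w/a$, which has $x_n$-degree $1$, would be forced to take its unique $w$-factor equal to $w_\delta$ and would then have nonnegative $z_\gamma$-degree, contradicting that its $z_\gamma$-degree is $-1$) and likewise $Y_n$-degree $0$, so $a\in R_{n-1}$; finally the multigraded component of $R_n$ in the degree of $w$ is exactly $R_{n-1}\cdot w$ (in any monomial of the generators lying in that component the unique $w$-factor must be $w_\gamma$ itself, else the $z_\gamma$-exponent would be $\ge 0$ rather than $-1$), so $w/a\in R_n$ forces $a\in U(R_{n-1})=U(R_n)$. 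Thus $z_\gamma$ and $w$ are irreducible in $R_n$, as required.

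The step I expect to be the main obstacle is the bookkeeping behind (b) and the identity that the degree-of-$w$ component of $R_n$ is $R_{n-1}\cdot w$, i.e. determining precisely which products of generators of $R_n$ fall into a prescribed multigraded component. The subtlety is that $R_n$ is not a polynomial ring over $T$: the one indeterminate $x_n$ is shared among all the $w_\delta$, which introduces the relations $\delta\,z_\gamma w_\gamma=\gamma\,z_\delta w_\delta$, so one cannot merely quote ``a variable of a polynomial ring is prime'' but must argue monomial by monomial — legitimate because any given element of $R_n$ involves only finitely many of the $z_\delta$. Beyond this, the proof uses only the two previously proved lemmas and standard facts about graded domains and polynomial rings.
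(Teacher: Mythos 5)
Your proof has exactly the same skeleton as the paper's: split on whether $\gamma$ is atomic in $R_{n-1}$ (using the persistence of irreducibles upward), and in the non-atomic case factor $\gamma x_n = z_\gamma\cdot(\gamma x_n/z_\gamma)$; the difference is only that the paper asserts the irreducibility of $z_\gamma$ and $\gamma x_n/z_\gamma$ without comment, while you supply a multigrading justification. The one point you leave dangling --- that $x_n$ itself is irreducible (or at least a product of irreducibles) in $R_n$, which is needed to conclude the atomic case --- is likewise asserted without proof in the paper, so your argument is faithful to, and somewhat more complete than, the original.
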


\begin{proof} Suppose $\gamma \notin \Gamma_{n - 1}$. Then, $\gamma$ is atomic in $R_{n - 1}$. The irreducible elements of $R_{n - 1}$ are also irreducible in $R_n$. Therefore, $\gamma$ is atomic in $R_n$ as well. $x_n$ is irreducible in $R_n$, making $\gamma x_n$ atomic.

Suppose $\gamma \in \Gamma_{n - 1}$. Then, $\gamma x_n = z_\gamma (\gamma x_n/z_\gamma)$. Once again, $\gamma x_n$ is atomic.
\end{proof}

\begin{lemma} $\nexists \beta \in R_n$ such that $\beta y_n$ has an irreducible divisor that $\beta$ does not have.
\end{lemma}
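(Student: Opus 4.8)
The plan is to use the $y_n$-grading on $R_n$. Write $S_n$ for the subring of $R_n$ generated by $R_{n-1}$ together with the $z_\gamma$ and the $x_n\gamma/z_\gamma$, $\gamma \in \Gamma_{n-1}$ (so $S_0 = \F_2$); then $R_n = S_n[Y_n]$, and since no positive power $y_n^q$ lies in $S_n$, every nonzero element of $R_n$ is uniquely $\sum_i c_i y_n^{q_i}$ with the $c_i \in S_n\setminus\{0\}$ and the $q_i \in \Q_{\geq 0}$ distinct. Define $m(\beta)$ to be the least such exponent $q_i$. Because $S_n$ is an integral domain (being a subring of $D$), the lowest-exponent term of a product is the product of the lowest-exponent terms, so $m(\alpha\beta) = m(\alpha) + m(\beta)$ for $\alpha,\beta \neq 0$; consequently every unit of $R_n$ has $m$-value $0$. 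Also, for $\beta \neq 0$, $y_n \mid \beta$ in $R_n$ precisely when $m(\beta) \geq 1$ (if every $q_i \geq 1$ then $\beta = y_n\sum_i c_i y_n^{q_i - 1}$ with the factor in $R_n$, and the converse is clear). The lemma amounts to the assertion that every irreducible divisor (in $R_n$) of $\beta y_n$ already divides $\beta$, so this is what I would prove.

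The key step --- and the only one carrying real content --- is the claim that every irreducible $\pi \in R_n$ has $m(\pi) = 0$. Suppose instead that $q := m(\pi) > 0$; then $q \in \Q_+$, so $y_n^q \in Y_n \subseteq R_n$ and $\pi_0 := \sum_i c_i y_n^{q_i - q}$ is an element of $R_n$ with $\pi = y_n^q \pi_0$ and $m(\pi_0) = 0$. Since $m(y_n^q) = q \neq 0$, the element $y_n^q$ is not a unit, so irreducibility of $\pi$ forces $\pi_0$ to be a unit. But then $\pi = y_n^{q/2}\cdot(y_n^{q/2}\pi_0)$, where $y_n^{q/2} \in R_n$ and both factors have $m$-value $q/2 > 0$ and hence are non-units --- contradicting the irreducibility of $\pi$. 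This is the main obstacle: once it is isolated, everything else is formal bookkeeping with $m$.

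Finally I would conclude as follows. The statement is vacuous when $\beta = 0$, so fix $\beta \neq 0$ in $R_n$ and an irreducible $\pi$ with $\pi \mid \beta y_n$, say $\beta y_n = \pi\delta$ with $\delta \in R_n$. Then $m(\delta) = m(\beta y_n) - m(\pi) = m(\beta) + 1 - 0 \geq 1$, so $y_n \mid \delta$; write $\delta = y_n\delta'$ with $\delta' \in R_n$. Cancelling $y_n$ from $\beta y_n = \pi y_n \delta'$ (legitimate since $R_n$ is a domain) gives $\beta = \pi\delta'$, i.e.\ $\pi \mid \beta$. Hence no $\beta \in R_n$ can possess an irreducible divisor of $\beta y_n$ that fails to divide $\beta$, which is the lemma.
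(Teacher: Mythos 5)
The paper states this lemma with no proof at all, so there is no argument of the author's to compare yours against; you have supplied content that the paper omits. Your proof of the statement as literally written is correct: $R_n = S_n[Y_n]$ is the monoid ring of $(\Q_{\geq 0},+)$ over the domain $S_n$, so the order function $m$ (the least $y_n$-exponent) is well defined and additive, units have $m = 0$, and $y_n \mid \beta$ exactly when $m(\beta) \geq 1$. Your key observation --- that an irreducible of $R_n$ must have $m = 0$, because $m(\pi) = q > 0$ would give the nontrivial factorization $\pi = y_n^{q/2}\cdot(y_n^{q/2}\pi_0)$ with both factors of positive $m$-value --- is exactly the right leverage point, and the cancellation step that converts $\pi \mid \beta y_n$ into $\pi \mid \beta$ is sound.

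One caveat about scope rather than correctness. In the theorem this lemma is meant to support, semi-Furstenberg is defined by quantifying over irreducibles of $D = \bigcup_N R_N$, so the irreducible divisor $\pi$ of $\beta y_n$ that must be ruled out is allowed to lie in some $R_N$ with $N > n$, not merely in $R_n$. Your valuation argument, as written, only controls irreducibles of $R_n$ itself: the later adjunctions of $z_\gamma$, $x_N\gamma/z_\gamma$ (for $\gamma$ possibly involving $y_n$) and $Y_N$ are not visibly compatible with the $y_n$-grading, so the three properties of $m$ would have to be re-established on $R_N$, or one would need a separate argument that an irreducible of $R_N$ dividing an element of $R_n$ cannot behave badly. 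You flagged your interpretation explicitly, which is good practice; just be aware that closing this $R_n$-versus-$D$ gap is where the remaining work lies, and the paper itself is silent on it.
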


\begin{theorem} $D$ is almost atomic, but not semi-Furstenberg.
\end{theorem}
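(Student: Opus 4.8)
The plan is to read the theorem off the four preceding lemmas, whose two halves are essentially independent: almost atomicity will come from the lemma that $\gamma x_n$ is atomic in $D$ for $\gamma\in R_{n-1}$ together with irreducibility of the indeterminates $x_n$ in $D$, while the failure of semi-Furstenbergness is, once the right non-unit is substituted, a direct restatement of the last lemma. First I would record two telescoping consequences of the earlier lemmas. Iterating $U(R_n)=U(R_{n-1})$ gives $U(R_n)=U(R_0)$ for all $n$, hence $U(D)=\bigcup_n U(R_n)=U(R_0)$, and $U(R_0)=\{1\}$ because $R_0=\F_2[Y_0]$ is a monoid algebra over a field whose monoid, $(\Q_{\ge 0},+)$, has trivial unit group; in particular $U(R_m)=U(D)$ for every $m$. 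Iterating $\textrm{Irr}(R_{n-1})\subseteq\textrm{Irr}(R_n)$ shows that an element irreducible in some $R_n$ is irreducible in every $R_m$ with $m\ge n$; since any nontrivial factorization in $D$ takes place in some such $R_m$, where one of the factors is then a unit of $R_m$ and hence of $D$, and since the element is a non-unit of $D$ because $U(D)=U(R_n)$, it is irreducible in $D$. The same telescoping shows that an element atomic in some $R_n$ is a product of irreducibles of $D$.

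For almost atomicity I would take an arbitrary $\alpha\in D\backslash D^*$, choose $n$ with $\alpha\in R_n$, and apply the lemma that $\gamma x_m$ is atomic in $D$ for $\gamma\in R_{m-1}$ with $m=n+1$ and $\gamma=\alpha$, obtaining that $\alpha x_{n+1}$ is atomic in $D$. Since $x_{n+1}$ is irreducible in $R_{n+1}$ (as used in the proof of that lemma), it is irreducible in $D$ by the observation above. Hence $\alpha x_{n+1}$ is a product of irreducibles of $D$ one of whose factors, $x_{n+1}$, is itself irreducible --- exactly the definition of almost atomic with the single auxiliary irreducible $\pi_1=x_{n+1}$. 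So $D$ is almost atomic, with no case distinction on whether $\alpha$ was already atomic.

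For the failure of semi-Furstenbergness I would argue by contradiction: suppose $D$ is semi-Furstenberg, witnessed by $\beta\in D$, and choose $n$ with $\beta\in R_n$. The element $y_n\in R_n\subseteq D$ is a non-unit of $D$ (it is not $1$, and $U(D)=\{1\}$), so the semi-Furstenberg hypothesis applied to $\alpha=y_n$ yields an irreducible $\pi\in D$ with $\pi\mid\beta y_n$ and $\pi\nmid\beta$. As $\beta\in R_n$, this contradicts the last lemma, which forbids any $\beta\in R_n$ from making $\beta y_n$ acquire an irreducible divisor that $\beta$ lacks. Hence no such $\beta$ exists and $D$ is not semi-Furstenberg.

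The substantive input, and the step I expect to be the main obstacle, is that last lemma: one must show $\beta y_n$ can never gain a new irreducible factor, even after passing to an arbitrarily large $R_m$. The construction is rigged so that a non-atomic $\gamma$ is ``resolved'' only by multiplying it by some $x_j$, via $\gamma x_j=z_\gamma\cdot(x_j\gamma/z_\gamma)$, whereas the $y_n$-direction is infinitely divisible, $y_n=(y_n^{1/2})^2=\cdots$, and is never repaired; moreover the new generators $z_\gamma$ and $x_j\gamma/z_\gamma$ do not let a monomial in $y_n$ alone factor in any new way, because a product of elements of $R_m$ that lies in $R_n$ must already be a product within $R_n$ (the same degree/indeterminate bookkeeping that underlies the units and irreducibles lemmas). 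Carrying that out uniformly over all $m\ge n$ is the crux; granting it, the theorem follows from the three paragraphs above.
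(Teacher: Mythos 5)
Your proposal is correct and follows essentially the same route as the paper: almost atomicity is read off from the lemma that $\gamma x_{n+1}$ is atomic (with $x_{n+1}$ irreducible supplying the auxiliary atom), and the failure of semi-Furstenbergness comes from taking $\alpha = y_n$ against any candidate $\beta \in R_n$ and invoking the final lemma. Your version is more careful than the paper's (you make explicit that units and irreducibles persist from $R_n$ to $D$, and you correctly flag that the unproved lemma about $\beta y_n$ is the real crux), but the argument is the same.
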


\begin{proof} Let $\gamma \in D$. Then, there exists some $n \in \Z_+$ such that $\gamma \in R_n$. $\gamma x_{n + 1}$ is atomic by Lemma $21$. Therefore, $D$ is almost atomic. Suppose $D$ is semi-Furstenberg. Then, there exists some $\beta \in D$ such that if $\alpha \in D$, then $\alpha\beta$ has an irreducible divisor that $\alpha$ does not. Once again, there exists some $n$ for which $\beta \in R_n$. Let $\alpha = y_n$. Then, every irreducible divisor of $\beta y_n$ also divides $\beta$.
\end{proof}

\section{Two Final Domains}

In this section, we obtain examples of the last two items of our list in the introduction. Because they are short proofs, they do not get their own sections.

\begin{lemma} Not all Furstenberg domains are quasi-atomic.
\end{lemma}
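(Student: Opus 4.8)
The plan is to construct a Furstenberg domain in which some element cannot be made atomic by any multiplier, following the same flavor of construction as the earlier examples. The natural candidate is a ring of power series (or polynomials) with mixed coefficients: take something like $D = \bigcup_{n=1}^\infty \big(\Z\llbracket x^{1/n}\rrbracket + x^{1+(1/n)}\R\llbracket x^{1/n}\rrbracket\big)$, i.e. power series with real coefficients, bounded exponent denominators, where the coefficient of $x^q$ must be an integer for $q \le 1$. (Alternatively the polynomial version $\Z[x] + x^2\R[x]$ from Example~8, which is already shown to be quasi-atomic but not almost atomic — but I would want a version that is genuinely \emph{Furstenberg}, which the polynomial ring is; I will double-check which base ring makes the Furstenberg property hold.) The first step is to identify the irreducible elements: as in Example~9, the irreducibles should be exactly the elements whose lowest-degree term is a constant (degree $0$) which is a rational prime, since $x^q$ always factors as $(x^{q/2})^2$ and any element with lowest term $cx^q$ with $q>0$ is divisible by every rational prime yet is not itself irreducible.

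Second, I would verify $D$ is Furstenberg: given a non-unit $f \in D$ with lowest term $cx^q$, if $q = 0$ then $c \ne \pm 1$ is an integer with a prime factor $p$, and $p \mid f$, so $f$ has an irreducible (indeed prime) divisor; if $q > 0$ then $f$ is divisible by every rational prime $p$ (since every higher term has a real coefficient but an exponent exceeding $1$, hence is in $pD$), so again $f$ has an irreducible divisor. That dispatches the Furstenberg half. Third — and this is the crux — I must show $D$ is not quasi-atomic. The obstruction element should be $x$ (or $x^r$ for irrational $r$, depending on the base ring). For $x$ to be quasi-atomic we would need $g \in D$ with $xg$ atomic, i.e. $xg = \pi_1\cdots\pi_m$ a product of irreducibles. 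Each $\pi_i$ has a lowest term which is a rational prime constant, so the product $\pi_1\cdots\pi_m$ has lowest term an integer $p_1\cdots p_m$ (a constant, degree $0$); but $xg$ has lowest term with positive exponent (at least $1$ plus the lowest exponent of $g$, which is $\ge 0$). Contradiction — no such $g$ exists, so $x$ witnesses that $D$ is not quasi-atomic.

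The main obstacle is the non-quasi-atomic direction, and specifically making the "lowest-term/valuation" bookkeeping airtight: I need that (a) in any factorization of an element of $D$ into irreducibles, every factor truly is one of the identified irreducibles (so that the lowest term of each factor is a prime constant), and (b) the $x$-adic valuation is additive on products in $D$ — which holds because $D$ is a domain sitting inside a power series ring, where the order of a product is the sum of the orders. A secondary subtlety is confirming that the Furstenberg property genuinely requires the real coefficients (as opposed to rational): with $\R$ in the high-degree part and $\Z$ in degrees $\le 1$, the element $x$ has no irreducible divisor among the constants $\pm 1$, so $x$ itself is \emph{not} a multiple of any irreducible — which is exactly why $D$ fails quasi-atomicity at $x$, while non-units with integer constant term or with positive lowest exponent still pick up a rational prime divisor, preserving Furstenberg. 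I would close by remarking that the same $D$ shows the arrow from Furstenberg down toward quasi-atomic in Figure~1 is not reversible.
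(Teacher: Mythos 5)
Your overall strategy --- find a ring whose irreducibles all have nonzero constant term, so that elements of positive order are never products of irreducibles, while still arranging that every non-unit has an irreducible divisor --- is the right one, and it is the paper's. But both concrete rings you float fail, for reasons you half-notice and do not resolve. The power-series ring $D=\bigcup_{n}\bigl(\Z\llbracket x^{1/n}\rrbracket + x^{1+(1/n)}\R\llbracket x^{1/n}\rrbracket\bigr)$ is not Furstenberg: your claim that any $f$ with lowest term $cx^q$, $q>0$, is divisible by every rational prime breaks down when $0<q\le 1$, since the coefficient of $x^q$ in $f/p$ is $c/p$ and must still be an integer. In particular $x$ (and $x^{1/2}$, etc.) has no irreducible divisor at all --- this is exactly the point of the paper's Example 12, which proves this union is \emph{not} Furstenberg. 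Your closing remark that ``$x$ is not a multiple of any irreducible \ldots\ preserving Furstenberg'' is self-contradictory: a non-unit with no irreducible divisor is, by definition, a counterexample to the Furstenberg property. The polynomial alternative $\Z[x]+x^2\R[x]$ fails for the opposite reason: there $x$ \emph{is} irreducible (any factorization $x=fg$ forces one factor to be a constant dividing $1$), so your description of the irreducibles is wrong, the valuation argument collapses at $x$ itself, and indeed the paper's Example 9 proves that ring is quasi-atomic, which disqualifies it outright.

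The fix is to let the rational (not real) coefficients begin at degree $1$ rather than degree $2$: the paper takes $D=\Z+x\Q[x]$. There $x/p\in D$ for every prime $p$, so every non-unit whose constant term is $0$ or composite has a prime divisor, and a polynomial with constant term $\pm 1$ is atomic by a degree count; that gives Furstenberg. At the same time every multiple of $x$ has constant term $0$, whereas a product of irreducibles (rational primes, and $\Q[x]$-irreducibles with constant term $\pm 1$) has nonzero integer constant term, so no multiple of $x$ is atomic and $D$ is not quasi-atomic. Note that the obstruction to quasi-atomicity must \emph{not} be ``the witness has no irreducible divisor'' --- that would contradict Furstenberg --- but rather ``the witness is divisible by every prime and can never be divided down to a unit,'' which is what the constant-term-zero condition achieves.
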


\begin{proof} Let $D = \Z + x\Q[x]$. In other words, $D$ is the ring of polynomials with rational coefficients in which the constant term is an integer. The irreducible element of $D$ are the rational primes and the polynomials with constant term $1$ that are irredudicble in $\Q[x]$ \cite{BC}. Let $\alpha \in D$ and let $c$ be the constant term of $\alpha$. If $c \neq \pm 1$, then $c$ has a rational prime divisor $p$. So, $p$ divides $\alpha$. If $c = \pm 1$, then $c$ has no non-unit constant divisors. If we write $\alpha$ as a product of non-unit elements, then the number of terms in that product is at most the degree of $\alpha$. In this case, we can express $\alpha$ as a product of irreducible elements. $D$ is Furstenberg.

Let $p$ be a rational prime. If $p$ is a product of two elements $\alpha, \beta \in D$, then $\alpha$ and $\beta$ have degree zero. Therefore, $\alpha$ and $\beta$ are integers. Either $\alpha$ or $\beta$ is $\pm p$ because $p$ is prime. The rational primes are irreducible in $D$.

Let $f$ be a polynomial with constant term $0$. Then, $f(x) = p(f(x)/p)$ for any prime $p$. $f(x)/p$ is an element of $D$ because its constant term is zero and all of its other terms are rational. Therefore, $f$ cannot be factored into irreducible elements. Every multiple of $f$ cannot be factored into irreducible elements because every multiple of $f$ has a constant term of $0$. $D$ is not quasi-atomic.
\end{proof}

\begin{lemma} There are domains that are neither antimatter nor quasi-Furstenberg.
\end{lemma}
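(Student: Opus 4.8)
The plan is to take $D$ to be the monoid ring $\Z[\Q_{\geq 0}]$, i.e.\ the ring of finite sums $\sum_i a_i x^{q_i}$ with $a_i \in \Z$ and $q_i \in \Q_{\geq 0}$ (equivalently $D = \bigcup_{n \geq 1} \Z[x^{1/n!}]$, an increasing union of polynomial rings), which is an integral domain. For nonzero $g \in D$ write $e(g)$ and $t(g)$ for the smallest and largest exponents appearing in $g$; since $\Z$ is a domain, the bottom (resp.\ top) monomial of a product is the product of the bottom (resp.\ top) monomials, so $e$ and $t$ are additive under multiplication. In particular a unit of $D$ has $e = t = 0$ and hence lies in $\Z$, so $D^\ast = \{\pm 1\}$, and every $g$ with $e(g) > 0$ is a non-unit.

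First, $D$ is not antimatter: if $2 = fg$ in $D$ then $t(f) + t(g) = t(2) = 0$ with $t(f), t(g) \geq 0$, forcing $t(f) = t(g) = 0$, so $f, g \in \Z$ and one of them is $\pm 1$; thus $2$ is irreducible. Next I would show $D$ is not quasi-Furstenberg by producing a non-unit $\alpha$ for which no $\beta \in D$ and irreducible $\pi \in D$ satisfy $\pi \mid \alpha\beta$ together with $\pi \nmid \beta$; the witness is $\alpha = x$. The only auxiliary fact required is that every irreducible $\pi \in D$ has $e(\pi) = 0$: if $e(\pi) = \varepsilon > 0$, then $x^{-\varepsilon/2}\pi \in D$ (all its exponents are $\geq \varepsilon/2 > 0$), and the factorization $\pi = x^{\varepsilon/2} \cdot (x^{-\varepsilon/2}\pi)$ exhibits $\pi$ as a product of two non-units, contradicting irreducibility. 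Granting this, suppose $\beta \in D$ and $\pi$ is an irreducible dividing $x\beta$, say $x\beta = \pi\gamma$ with $\gamma \in D$. Then $e(\gamma) = e(x\beta) - e(\pi) = 1 + e(\beta) \geq 1$, so $x^{-1}\gamma \in D$; writing $\gamma = x\delta$ and cancelling $x$ (legitimate in the domain $D$) yields $\beta = \pi\delta$, i.e.\ $\pi \mid \beta$. Thus every irreducible divisor of $x\beta$ already divides $\beta$, so $x$ defeats the quasi-Furstenberg condition; taking $\beta = 1$ in particular shows $x$ has no irreducible divisor whatsoever. Hence $D$ is neither antimatter nor quasi-Furstenberg.

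I do not expect a genuine obstacle here once this $D$ is in hand: the argument is entirely the exponent bookkeeping above, and the only care needed is that $e$ and $t$ are well defined (each element of $D$ is a finite sum of monomials) and additive, and that multiplying by a power $x^{-r}$ returns an element of $D$ precisely when the resulting exponents remain nonnegative. Conceptually the example works because of a tension between two features of $D$: the integer (not rational) coefficients keep the rational primes irreducible, so $D$ is not antimatter, while those same integer coefficients stop the infinitely divisible element $x = (x^{1/2})^2 = (x^{1/3})^3 = \cdots$ from ever acquiring an irreducible divisor, even after multiplication by another element --- in contrast with a ring such as $\Z + x\Q[x]$, where the analogous element is divisible by every rational prime and the construction breaks down.
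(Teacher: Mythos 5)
Your proof is correct and takes essentially the same route as the paper: your ring $\Z[\Q_{\geq 0}] = \bigcup_n \Z[x^{1/n!}]$ is literally the paper's $\bigcup_n \Z[x^{1/n}]$, you use the same witness $\alpha = x$, and the same key fact that every irreducible has smallest exponent $0$ (nonzero constant term). Your exponent bookkeeping via $e$ and $t$ is in fact a cleaner write-up of the paper's argument, which contains a couple of typos at exactly these steps.
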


\begin{proof} Let $D = \bigcup_{n = 1}^\infty \Z[x^{1/n}]$. Note that $D$ is the same as the domain from Example $12$ except we are allowing the constant term to be nonzero. $D$ is not an antimatter domain because the rational primes are irreducible.

Let $\alpha = x$ and let $\beta$ any nonzero element of $D$. We can show that if an irreducible element $\pi$ divides $\alpha\beta$, then $\pi$ must divide $\beta$ as well. If $\pi$ has constant term $0$, then $\pi$ is a multiple of $x^q$ for some $q \in \Q_+$. But, $x^q$ is not irreducible. Therefore, the constant term of $\pi$ is $0$. But, the constant term of $\beta x$ is $0$. Hence, $\pi x \mid \beta x$ and $\pi \mid \beta$. $D$ is not quasi-Furstenberg.
\end{proof}

\section*{Acknowledgements}
This paper came out the University of Georgia 2015-2016 AGANT group. Funding support came from National Science Foundation RTG grant DMS-1344994. This project began in a 2015-2016 VIGRE group run by Paul Pollack and Pete L. Clark. In addition, the author wishes to thank Jim Coykendall, Daniel Krashen, and Ziqing Xiang for their help in writing this paper.

\begin{appendix}
\section{Another Semi-Atomic Domain}\label{App:AppendixA}

In this section, we present another semi-atomic domain that is not also atomic. While this domain shares certain similarities with the domain from Section $4$, it is still different enough to warrant attention in and of itself.

Let $r_1, r_2, \ldots$ be a countable set of $\Q$-linearly independent real numbers with limit $0$ and sum $1$. Let $K$ be the subset of $(\Z_+ \cup \{0\})^{\Z_+}$ in which all but finitely many terms have the same value. We define an injection from $K$ to $\R$.
\[(a_1, a_2, \ldots) \mapsto \sum_{i = 1}^\infty a_i r_i\]

Because the sequence on the left is bounded and the sum of the $r_i$'s is $1$, the map is well-defined. It is injective because the $r_i$'s are linearly independent over $\Q$. Let $S$ be the additively closed subset of $K$ generated by elements of the following two forms.
\begin{enumerate}
\item{Every $a_i$ is a multiple of $7$ and not all $a_i$ are $0$.}
\item{$\lim_{i \to \infty} a_i \in \{3, 5\}$.}
\end{enumerate}

For a given element $r = (a_1, a_2, \ldots) \in S$, we refer to $\lim_{i \to \infty} a_i$ as the limit $L(r)$ of $S$. Let $e_n$ be the element in which $a_n = 1$ and $a_i = 0$ when $i \neq n$.

Let $X = \{x^\alpha : \alpha \in S\}$ and $R = \F_2 [X]_{(X)}$. $\F_2 [X]$ is the set of polynomials in one variable $x$ in which the exponents are elements of $S$. $R$ is the localization in which the polynomials with constant term $1$ have inverses. $\F_2 [X]$ is a ring because $S$ is closed under addition. $(X)$ is a maximal ideal because $\F_2 [X]/(X) \cong \F_2$, which a field. Thus, $R$ is well-defined.

Let $M = S \backslash (S + S)$ and $\langle M \rangle$ be the set of finite sums of elements of $M$.

\begin{lemma} $m \in M$ if and only if $m$ satisfies one of the following two conditions.
\begin{enumerate}
\item{$m = 7e_n$ for some $n \in \Z_+$.}
\item{$L(m) \in \{3, 5\}$ and every term in $m$ is less than $7$.}
\end{enumerate}
\end{lemma}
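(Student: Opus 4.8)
The plan is to first reduce everything to analyzing a single generator of $S$. Writing an arbitrary $m \in M \subseteq S$ as a nonempty finite sum of generators, if two or more generators appear then $m$ decomposes as (one generator) $+$ (the sum of the remaining generators), both of which lie in $S$, so $m \in S+S$, contradicting $m \in M$. Hence every element of $M$ is itself a generator, necessarily of type (i) or of type (ii). Here I am using that ``additively closed set generated by'' means the set of \emph{nonempty} finite sums of generators, so $0 \notin S$ (otherwise $M$ would be empty), and that all entries in $K$ are nonnegative integers, so in any identity $m = s + t$ with $s,t \in S$ the summands are dominated by $m$ coordinatewise.

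For the necessity of the stated form I would peel off copies of $7e_j$. If $m$ is a type-(i) generator and $m \neq 7e_n$ for every $n$, pick a coordinate $j$ with $a_j \geq 7$; then $7e_j \in S$, and $m - 7e_j$ is again a nonzero type-(i) element (its entries are still nonnegative multiples of $7$, and not all zero precisely because $m \neq 7e_j$), so $m = 7e_j + (m - 7e_j) \in S+S$, a contradiction. If $m$ is a type-(ii) generator with some entry $a_j \geq 7$, the same move applies since $m - 7e_j$ still has limit in $\{3,5\}$ and hence lies in $S$. Since the type-(i) and type-(ii) generators exhaust all generators (and are in fact disjoint, a vector all of whose entries are multiples of $7$ having limit a multiple of $7$), this forces $m$ into one of the two listed forms.

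For sufficiency I must check that the two forms really lie in $M$. The element $7e_n$ can only be written in $K$ as $k e_n + (7-k)e_n$ with $0 \leq k \leq 7$; but any element of $S$ supported on $\{n\}$ with limit $0$ is a sum of generators each coordinatewise below it, hence each also supported on $\{n\}$ with limit $0$, hence each a finitely supported type-(i) generator, so its $n$-th entry is a positive multiple of $7$. Thus $j e_n \notin S$ for $1 \leq j \leq 6$, and combined with $0 \notin S$ no decomposition of $7e_n$ exists. The remaining case --- a type-(ii) generator $m$ all of whose entries are below $7$ --- is the crux, and I expect it to be the main obstacle. Here I would use that $L$ is additive on sums in $S$ and that the set of limits realized by elements of $S$ is exactly the numerical monoid generated by $\{3,5,7\}$ (type-(i) generators contribute limits in $7\Z_{\ge 0}$, type-(ii) generators contribute $3$ or $5$), which equals $\Z_{\ge 0} \setminus \{1,2,4\}$. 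Since neither $3$ nor $5$ can be written as a sum of two elements of this set except as $0+3$ or $0+5$, any decomposition $m = s+t$ forces $L(s)=0$ or $L(t)=0$, say $L(t)=0$; then $t$ is a sum of finitely supported type-(i) generators, so every entry of $t$ is a multiple of $7$, while $t \leq m$ coordinatewise keeps every entry of $t$ below $7$, forcing $t = 0 \notin S$, a contradiction. Hence $m \in M$. The only nontrivial input is the elementary numerical-semigroup fact that $1,2,4$ are not represented by $\{3,5,7\}$; everything else is coordinatewise bookkeeping.
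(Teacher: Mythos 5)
Your proof is correct and follows essentially the same route as the paper's: both directions come down to coordinatewise domination, additivity of the limit $L$, and peeling off $7e_n$ whenever some entry reaches $7$. If anything your write-up is more careful than the paper's --- the explicit reduction of $M$ to single generators and the appeal to the numerical monoid $\langle 3,5,7\rangle = \Z_{\geq 0}\setminus\{1,2,4\}$ tighten steps the paper leaves implicit --- but the underlying argument is the same.
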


\begin{proof} If $m = 7e_n$, then $m - r \notin S$ for all $r$ with limit $3$ or $5$. In addition, $m$ cannot be the sum of two nonzero elements in which every term is a multiple of $7$. Therefore, $m \in M$. If $L(m)$ is either $3$ or $5$ and every term in $m$ is less than $7$, then $m - 7e_n \notin S$ for all $n \in \Z_+$. In addition, $m - r \notin S$ for all $r$ with limit $3$ or $5$ because $L(m - r) < 3$, even though every term is less than $7$.

Suppose $m = (a_1, a_2, \ldots) \in M$. $M$ is generated by elements with limit $0$, $3$, and $5$. So, $L(m) \in \{0, 3, 5\}$. Suppose $L(m) \in \{3, 5\}$. If $a_n > 6$, then $m - 7e_n \in S$ and $m \notin M$. Hence, every $a_n$ is at most $6$. Suppose $L(m) = 0$. It is impossible for $m - r$ to be an element of $S$ if $L(r) > 0$ because that would imply that $L(m - r)$ is negative. Thus, every $a_n$ is a multiple of $7$. If $a_n > 7$, then $m - 7e_n \in S$. If $a_n, a_m \neq 0$ for distinct $n, m$, then $m - 7e_n \in S$ is nonzero. If $L(m) = 0$, then $m = 7e_n$.
\end{proof}

\begin{lemma} The only elements of $S$ that do not belong to $\langle M \rangle$ are those with limit $7$.
\end{lemma}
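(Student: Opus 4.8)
The plan is to establish the set equality $S\setminus\langle M\rangle=\{s\in S:L(s)=7\}$ by proving the two inclusions, the only arithmetic input being that $7$ is the Frobenius number of $\{3,5\}$, so a non-negative integer fails to be of the form $3p+5q$ with $p,q\ge0$ precisely when it equals $1$, $2$, $4$, or $7$. For the inclusion $\{s\in S:L(s)=7\}\subseteq S\setminus\langle M\rangle$, note that by the preceding lemma every element of $M$ is either some $7e_n$, with limit $0$, or has limit $3$ or $5$; since $L$ is additive on finite sums of elements of $S$, every element of $\langle M\rangle$ then has limit of the form $3p+5q$, which is never $7$. Hence $L(s)=7$ forces $s\notin\langle M\rangle$.

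For the reverse inclusion I would fix $s=(a_1,a_2,\ldots)\in S$ with $c:=L(s)\ne7$ and show $s\in\langle M\rangle$. Writing $s$ as a finite sum of generators and using that every entry of every generator is non-negative: the generators all of whose entries are divisible by $7$ contribute a multiple of $7$ to the limit, while each generator of limit $3$ or $5$ contributes $3$ or $5$, so $c=7M+3j+5l$ for some $M,j,l\ge0$; in particular $c\notin\{1,2,4\}$. Combined with $c\ne7$, this leaves two possibilities. If $c=0$, then no generator of limit $3$ or $5$ can have occurred (each would force $c\ge3$), so every $a_i$ is a multiple of $7$ and all but finitely many of the $a_i$ vanish; then $s=\sum_i(a_i/7)\,(7e_i)$ exhibits $s$ as a finite sum of elements of $M$.

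If instead $c\ge3$, choose $p,q\ge0$ with $3p+5q=c$ (so $p+q\ge1$), pick $N$ with $a_i=c$ for all $i>N$, and for $i\le N$ write $a_i=7t_i+d_i$ with $0\le d_i\le6$. Form $p$ sequences of limit $3$ and $q$ sequences of limit $5$, all of which are $0$ at every position $\le N$ except one distinguished sequence, which we set equal to $d_i$ at position $i$ for each $i\le N$; together with $t_i$ copies of $7e_i$ for each $i\le N$, these sum to $s$ — the distinguished sequence and the $7e_i$'s account for the positions $\le N$, and all $p+q$ sequences together account for the constant tail, since $3p+5q=c$. Every summand lies in $M$ by the preceding lemma (each is a $7e_i$, or has limit $3$ or $5$ with every term at most $6$, as $d_i\le6$), so $s\in\langle M\rangle$.

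The argument is essentially bookkeeping; the one point demanding care is that $\langle M\rangle$ consists of \emph{finite} sums, so one cannot simply reduce every entry of $s$ modulo $7$: when $c\ge7$ the constant tail $a_i=c$ would then require a copy of $7e_n$ at each of infinitely many positions. The construction circumvents this by letting the limit-$3$ and limit-$5$ atoms carry the whole tail exactly — which is precisely why the representation $3p+5q=c$ is invoked — so copies of $7e_n$ are needed only at the finitely many ``head'' positions $i\le N$. The degenerate case $c=0$, where no limit-$3$ or limit-$5$ atom is available, is the reason that case is peeled off and settled separately via divisibility by $7$.
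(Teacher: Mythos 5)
Your proof is correct and follows essentially the same route as the paper's: a case analysis on the limit $L(s)$, using that $7$ is the unique limit attainable in $S$ that is not of the form $3p+5q$ with $p,q\ge 0$, reducing head positions by subtracting copies of $7e_n$ and letting limit-$3$ and limit-$5$ atoms carry the tail. The only difference is organizational --- you merge the paper's separate cases $L(r)\in\{3,5\}$, $L(r)=6$, and $L(r)>7$ into a single explicit decomposition and are more careful about why only finitely many summands are needed --- which is a modest improvement in rigor but not a different argument.
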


\begin{proof} Let $r = (a_1, a_2, \ldots) \in S$. We consider the possible values of $L(r)$.
\begin{enumerate}
\item{$L(r) = 0$. If some $a_i$ is not a multiple of $7$, then there exists some $m$ with limit $3$ or $5$ such that $r - m \in S$. This is impossible because $L(r) = 0$. Every $a_i$ is a multiple of $7$ and there are only finitely many nonzero $a_i$. We may write $r$ as a finite sum $e_{b_1} + \ldots + e_{b_n}$ where $b_1, \ldots, b_n$ form a sequence of not necessarily distinct positive integers. $r \in \langle M \rangle$.}
\item{$L(r)$ is $3$ or $5$. Only finitely many terms of $r$ are greater than $6$. We can subtract elements of the form $7e_n$ from $r$ until every term is smaller than $7$. In the process, we have only subtracted elements of $M$ and obtained an element of $M$ as the result. Once again, $r \in \langle M \rangle$.}
\item{$L(r) = 6$. $r = r_1 + r_2$ in which $L(r_1) = L(r_2) = 3$. Only finitely many terms are greater than $7$ and we can subtract $7e_n$ for each one.}
\item{$L(r) > 7$. Every number greater that $7$ can be expressed in the form $3x + 5y$ with $x, y \in \Z_+ \cup \{0\}$ because $3$ and $5$ are relatively prime and $(3 \cdot 5) - (3 + 5) = 7$. Either $x$ or $y$ is positive. Suppose $x$ is positive. We may subtract $x - 1$ elements with limit $3$ and $y$ elements with limit $5$ from $m$ and obtain an element ending in $3$. This reduces to the previous case. If $y$ is positive, we obtain a sequence ending in $5$ and the proof is similar. $r \in \langle M \rangle$ again.}
\item{$L(r) = 7$. Suppose $r \in \langle M \rangle$. Then, there exist $m_1, \ldots, m_n \in M$ such that $r = m_1 + \ldots + m_n$. Each $m_i$ has limit $0$, $3$, or $5$ by the previous lemma. But, it is impossible to express $7$ in the form $3x + 5y$ with $x$ and $y$ non-negative integers. Therefore, $r \notin \langle M \rangle$.}
\end{enumerate}
\end{proof}

\begin{theorem} $R$ is not atomic.
\end{theorem}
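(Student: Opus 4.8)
The plan is to exhibit a single nonzero non-unit of $R$ that has no factorization into irreducibles. The natural candidate is $x^r$, where $r \in S$ is the image of the constant sequence $(7,7,7,\ldots) \in K$: this $r$ is a generator of $S$ of the first kind, $x^r$ has constant term $0$ and hence is a non-unit of the local ring $R$, and $L(r) = 7$, so $r \notin \langle M \rangle$ by Lemma 28.

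First I would set up an order valuation. Since the exponents occurring in elements of $\F_2[X]$ lie in the cancellative monoid $(S,+)$, every nonzero $p \in \F_2[X]$ has a least exponent $\operatorname{ord}(p) \in S$, and $\operatorname{ord}(pq) = \operatorname{ord}(p) + \operatorname{ord}(q)$. The polynomials inverted in forming $R$ from $\F_2[X]$ are exactly those with $\operatorname{ord} = 0$, that is, of constant term $1$, so $\operatorname{ord}$ extends to a monoid homomorphism $v \colon R \setminus \{0\} \to S$ whose kernel is the group of units of $R$. Hence if $x^r = \pi_1 \cdots \pi_n$ with each $\pi_i$ irreducible, then $r = v(x^r) = v(\pi_1) + \cdots + v(\pi_n)$ and each $v(\pi_i) \in S$. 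So it is enough to show that $v(\pi) \in \langle M \rangle$ for every irreducible $\pi \in R$: granting this, $r = \sum_i v(\pi_i) \in \langle M \rangle$ because $\langle M \rangle$ is closed under addition, contradicting $r \notin \langle M \rangle$; thus $x^r$ is not a product of irreducibles and $R$ is not atomic.

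To show $v(\pi) \in \langle M \rangle$ for irreducible $\pi$, I would argue by contradiction: assume $a := v(\pi) \notin \langle M \rangle$, so $L(a) = 7$ by Lemma 28. Write $\pi = f/s$ with $f, s \in \F_2[X]$, $s$ of constant term $1$, and $\operatorname{ord}(f) = a$. The plan is then to divide $\pi$ in $R$ by the monomial $x^{7 e_n}$ for a sufficiently large index $n$; this contradicts irreducibility, since $x^{7 e_n}$ is a non-unit and the cofactor $\pi x^{-7 e_n}$ has $v$-value $a - 7 e_n$, which lies in $S$ and again has limit $7$, hence is a nonzero non-unit. Choose $n$ beyond the finitely many coordinates at which an exponent of $f$ has not yet stabilized and beyond the coordinate from which $a$ is constantly $7$. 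A coordinatewise check then shows that subtracting $7 e_n$ from an exponent $t$ of $f$ leaves it in $S$ whenever $L(t) \geq 7$ (for $L(t) = 7$ one uses that an element of $S$ of limit $7$ has every coordinate divisible by $7$, as $7 \neq 3x + 5y$). The exponents $t$ of $f$ with $L(t) < 7$ have to be cancelled first, by replacing $f$ with $q f$ for a suitable polynomial $q$ of constant term $1$, using the rigid additive structure of $S$ recorded in Lemmas 27 and 28 to select the cancelling terms; after this adjustment $x^{7 e_n}$ divides the numerator and hence divides $\pi$.

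The hard part will be exactly this last step — keeping control of the localization. An element of $R$ is a fraction, so to split a monomial irreducible off a putative irreducible one must first clean up the numerator: eliminate every exponent of limit less than $7$ by a unit multiple without creating new such exponents, and check that the cleanup terminates. This is where the combinatorics of $S$ from Lemmas 27 and 28 is genuinely used, and it is the delicate point of the whole argument.
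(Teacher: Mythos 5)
Your choice of test element and your use of Lemma 28 match the paper's (the paper takes an arbitrary $\alpha \in S \setminus \langle M \rangle$), and the order valuation $v$ is set up correctly. The problem is the reduction you make with it: to conclude $r = \sum_i v(\pi_i) \in \langle M \rangle$ you demand that $v(\pi) \in \langle M \rangle$ for \emph{every} irreducible $\pi$ of $R$. That is a global statement about the ring, far stronger than what the theorem needs, and you do not prove it --- the ``cleanup'' step you flag as the delicate point is exactly where the argument is missing. Worse, there is a concrete obstruction to the cleanup as described. Over $\F_2$, if $f$ has least exponent $a$ and another exponent $b$ with $L(b) < 7$, then for any $s$ of constant term $1$ the coefficient of $x^b$ in $fs$ equals $1$ plus the number of pairs $(t, t')$ with $t$ an exponent of $f$, $t'$ a nonzero exponent of $s$, and $t + t' = b$; so the term $x^b$ can be cancelled only if $b - t \in S$ for some other exponent $t$ of $f$. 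For $f = x^a + x^b$ with $L(a) = 7$ and $b - a \notin K$ (say $a = (7,7,7,\ldots)$ and $b = (N,3,3,\ldots)$ with $N$ large, so $b > a$ in $\R$ but $b - a$ has negative coordinates), no unit multiple removes the exponent $b$, and hence $x^{7e_n}$ never divides $fs$. So either such an $f$ is reducible for a reason your sketch does not supply, or your key lemma is false; in either case the proof does not close.

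The paper avoids all of this because the element being factored is itself a monomial. If $x^\alpha = \pi_1 \cdots \pi_n$ with each $\pi_i$ irreducible, then each $\pi_i$ is forced to be a unit times a monomial $x^{\beta_i}$ (divisors of a homogeneous element of the $S$-graded domain $\F_2[X]$ are homogeneous, and passing to the localization at $(X)$ only introduces unit factors), and an irreducible monomial must have $\beta_i \in M = S \setminus (S + S)$, since $\beta_i = \gamma + \delta$ with $\gamma, \delta \in S$ nonzero would give the nontrivial factorization $x^{\beta_i} = x^{\gamma} x^{\delta}$. Hence $\alpha = \sum_i \beta_i \in \langle M \rangle$, contradicting $\alpha \notin \langle M \rangle$. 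If you replace your global claim about all irreducibles with this local statement about the irreducible divisors of $x^r$ only, your valuation argument goes through and you essentially recover the paper's proof.
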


\begin{proof} Let $\alpha \in S \backslash \langle M \rangle$. $\alpha$ is well-defined by the previous lemma. Consider the element $x^\alpha \in R$. Suppose $x^\alpha$ were atomic. Then, there would exist a finite set of irreducible polynomials $f_1 (x), \ldots, f_n (x) \in R$ such that $x^\alpha = f_1 (x) \ldots f_n (x)$. Each $f_i$ has the form $x^\beta_i$ for some $\beta_i \in R$. We have $\alpha = \beta_1 + \ldots + \beta_n$. Because each $f_i$ is irreducible, $f_i \in M$. Therefore, $\alpha = \beta_1 + \ldots + \beta_n \in \langle M \rangle$, contradicting our original assumption. Hence, $R$ is not atomic.
\end{proof}

In order to show that $R$ is not semi-atomic, we will prove a lemma about the nature of $R$ first.

\begin{lemma} Every element of $R$ is atomic or of the form $x^\beta g(x)$, where $g(x)$ is atomic and $\beta$ is an element of $S$ with limit $7$.
\end{lemma}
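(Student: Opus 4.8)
The plan is to follow the strategy of Section~4 --- reduce to monomial-type generators, peel off the ``bad'' monomial part, and show that what is left is atomic --- with the combinatorics of $S$ supplied by the description of $M$ and by the fact that the elements of $S$ outside $\langle M\rangle$ are exactly those of limit $7$. I would first record the dictionary. Since $R$ is the localization of $\F_2[X]$ at the maximal ideal $(X)$ it is local, its units are precisely the elements with nonzero constant term, and every nonzero element of $R$ is either a unit (hence the empty product of irreducibles) or an associate of some $f = x^{b_1}+\dots+x^{b_n}\in\F_2[X]$ with $b_1,\dots,b_n\in S$ distinct; so it suffices to treat such $f$. As in the proof that $R$ is not atomic, a factor of a monomial is again a monomial (in a product in $\F_2[X]$ neither the smallest nor the largest exponent, in the real embedding, can cancel), so $x^{\beta}$ is irreducible exactly when $\beta\in M$, and $x^{\beta}$ is atomic exactly when $\beta\in\langle M\rangle$, i.e.\ exactly when $L(\beta)\neq 7$.

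Next I would peel off a maximal monomial factor: choose $\gamma\in S\cup\{0\}$, maximal in the real embedding subject to $b_i-\gamma\in S\cup\{0\}$ for all $i$, and write $f=x^{\gamma}h$. Then $h$ has no nonconstant monomial divisor, and if $f$ is a monomial then $h=1$. The existence of a maximal $\gamma$ already uses the structure of $S$: the admissible $\gamma$ have bounded value ($\le\min_i b_i$), and one cannot keep dividing off monomials because, by the description of $M$, a nonzero element of $S$ of limit $0$ is a finitely supported tuple of multiples of $7$ while an element of positive limit has limit at least $3$, so only finitely much can be removed.

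The heart of the argument is the claim that $h$ is atomic, and this is where I expect to do real work. The idea is to show that no factorization of $h$ into non-units can be refined indefinitely: having no nonconstant monomial divisor is inherited by divisors, a non-unit divisor of $h$ that is not a monomial has at least two terms and a nonzero smallest exponent in $S$, and in any factorization of $h$ into non-units these smallest exponents are nonzero elements of $S$ summing to the smallest exponent of $h$. One then wants to conclude finiteness from the rigidity of $S$ in the description of $M$ --- but here a naive exponent count is not enough, since (unlike in Section~4) $S$ sits in an infinite-rank lattice and a fixed element can be an unbounded sum of nonzero elements of $S$ once a summand of limit $7$ is allowed to absorb the rest. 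So the termination argument must also use that each factor is a genuine polynomial with at least two terms, not merely a monomial, which restricts its exponents; getting this restriction to interact with the monoid structure of $S$ so as to force termination is the delicate point.

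Granting atomicity of $h$, the conclusion is immediate: if $L(\gamma)\neq 7$ then $\gamma\in\langle M\rangle$ by the classification of $S\setminus\langle M\rangle$, so $x^{\gamma}$ is a product of the irreducibles $x^{m}$ ($m\in M$) and hence $f=x^{\gamma}h$ is atomic; and if $L(\gamma)=7$ then $\gamma$ is a nonzero element of $S$ of limit $7$ and $f=x^{\gamma}h$ with $h$ atomic, which is exactly the second alternative. So, as noted above, the only genuine obstacle is showing that a primitive element of $R$ with more than one term --- one with no nonconstant monomial divisor --- is a finite product of irreducibles.
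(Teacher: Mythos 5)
Your outline leaves its central claim unproved, and you say so yourself: the assertion that an element $h$ with no nonconstant monomial divisor is atomic is flagged as ``the delicate point'' and ``the only genuine obstacle,'' but no argument for it is supplied. That is a genuine gap, not a detail. Worse, the reduction you chose makes this claim harder than it needs to be: primitivity of the polynomial does not by itself control the decomposition lengths of its individual exponents inside the monoid $S$, which is exactly the obstruction you correctly identify (an exponent whose tail value is at least $7$ can absorb arbitrarily many summands $7e_n$). Your argument for the existence of a \emph{maximal} monomial divisor $\gamma$ is also shaky, since $S$ contains nonzero elements of arbitrarily small real value, so the set of admissible $\gamma$ need not be finite and a supremum need not be attained.

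The paper's proof keys the case division to the limits $L(\alpha_i)$ of the exponents rather than to primitivity, and this is what dissolves the difficulty. If some exponent $\alpha_i$ has $L(\alpha_i)$ not expressible as $3x+5y+7z$ with $z\ge 1$ (so $L(\alpha_i)\in\{0,3,5,6\}$ in the paper's accounting), then that single exponent can be written as a sum of at most $C$ nonzero elements of $S$: at most two summands can have positive limit, and each limit-zero summand contributes mass at least $7$ to the finite excess of $\alpha_i$ over its tail, which is below $7$ in these cases. Since every factorization of $f$ into $k$ non-units forces $\alpha_i$ to be a sum of $k$ nonzero elements of $S$, the factorization length of $f$ itself is bounded and $f$ is atomic --- no primitivity and no maximal monomial divisor are needed. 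Only in the remaining case does the paper divide off monomials, and then only monomials $x^\beta$ with $L(\beta)=7$, whose number is controlled by the limits. If you want to salvage your route, you must actually prove that a primitive non-monomial necessarily has an exponent with limit in $\{0,3,5,6\}$ (or otherwise bound its factorizations); as written, the proposal reduces the lemma to an unproven statement that is at least as hard as the lemma itself.
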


\begin{proof} Let $f(x) \in R$. Let $f(x) = x^{\alpha_1} + \ldots + x^{\alpha_n}$, where each $\alpha_i$ is a distinct element of $S$. (If some $\alpha_i$ is zero, then $f$ is a unit.)
\begin{enumerate}
\item{There exists some $i \leq n$ such that $L(\alpha_i)$ cannot be expressed in the form $\alpha_i = 3x + 5y + 7z$ with $x$ and $y$ non-negative and $z$ positive. The only possible values for $L(\alpha_i)$ are $0$, $3$, $5$, and $6$. In each case, we cannot express $\alpha_i$ as a sum of more than $C$ elements for some constant $C$. Therefore, we cannot write $f(x)$ as a product of more than $C$ elements. $f$ is atomic.}
\item{For every $i$, there exist non-negative $x$ and $y$ and positive $z$ such that $\alpha_i = 3x + 5y + 7z$. We may keep dividing $f(x)$ by elements of the form $x^\beta$ in which the limit of $\beta$ is $7$. There exist $\beta_1, \ldots, \beta_m$ with limit $7$ such that $f(x) = x^{\beta_1} \ldots x^{\beta_m} g(x)$, where $g(x)$ is an element of the previous case. So, $g(x)$ is atomic. If $m > 1$, then $x^{\beta_1 + \ldots + \beta_m}$ is atomic because $\beta_1 + \ldots + \beta_m \in \langle M \rangle$. If $m > 1$, then $f$ is atomic. If $m = 1$, then $f(x) = x^\beta g(x)$, where $L(\beta) = 7$ and $g$ is atomic.}
\end{enumerate}
\end{proof}

\begin{theorem} $R$ is semi-atomic.
\end{theorem}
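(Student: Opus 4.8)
The plan is to produce a single witness $w \in R$ with the property that $\alpha w$ is a product of irreducibles for every non-unit $\alpha \in R$. The natural choice is a monomial $w = x^{\gamma_0}$ whose exponent ``repairs'' the obstruction to atomicity isolated above: recall from the preceding results that a monomial $x^\delta$ fails to be atomic precisely when $L(\delta) = 7$, equivalently when $\delta \notin \langle M \rangle$. So I would take $\gamma_0 = (3, 3, 3, \dots)$, which is a generator of $S$ of type~(ii); then $L(\gamma_0) = 3$, so $\gamma_0 \in \langle M \rangle$ and $w = x^{\gamma_0}$ is itself a product of irreducibles.

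Now let $\alpha \in R$ be a non-unit. By the previous lemma, either $\alpha$ is atomic, or $\alpha = x^\beta g(x)$ with $g(x)$ atomic and $\beta \in S$ satisfying $L(\beta) = 7$. In the first case $\alpha w$ is a product of two atomic elements and hence atomic. In the second case $\alpha w = x^{\beta + \gamma_0} g(x)$; since $S$ is closed under addition, $\beta + \gamma_0 \in S$, and since $L$ is additive, $L(\beta + \gamma_0) = 7 + 3 = 10 \neq 7$, so $\beta + \gamma_0 \in \langle M \rangle$ and $x^{\beta + \gamma_0}$ is a product of irreducibles. As $g(x)$ is atomic, $\alpha w$ is again atomic. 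In both cases $\alpha w$ is a product of irreducibles, so $R$ is semi-atomic with witness $w$.

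The only step requiring any thought is the choice of $\gamma_0$. For the argument to close, multiplication by $x^{\gamma_0}$ must push every ``bad'' exponent $\beta$ (the ones with limit $7$) out of the set of bad exponents, which by the characterization of $\langle M \rangle$ only requires $L(\gamma_0) \neq 0$; and $w$ must itself be atomic, which requires $L(\gamma_0) \neq 7$. Any generator of $S$ of type~(ii) meets both constraints, so there is genuine slack. All of the substantive work — identifying $M$, showing that the non-atomic monomials are exactly the $x^\beta$ with $L(\beta) = 7$, and establishing the structure lemma for arbitrary elements of $R$ — is already done in the earlier lemmas, so this theorem amounts to a short assembly of those facts rather than a new computation.
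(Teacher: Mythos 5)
Your proof is correct and follows essentially the same route as the paper: the paper's witness is likewise a monomial $x^\alpha$ with $L(\alpha)=3$ (and all terms below $7$), and the argument proceeds identically via Lemma $16$ to reduce to the case $f = x^\beta g$ with $L(\beta)=7$, then uses Lemma $15$ to see that $L(\alpha+\beta)=10 \neq 7$ puts $\alpha+\beta$ back in $\langle M\rangle$. Your concrete choice $\gamma_0=(3,3,3,\dots)$ is just a particular instance of the paper's witness, and your closing observation about the slack in the choice is accurate.
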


\begin{proof} Let $\alpha$ be an element of $S$ with limit $3$ and no terms greater than $6$. We show that $x^\alpha f(x)$ is atomic for any $f(x)\in R$. Note that $x^\alpha$ is irreducible by Lemma $14$. If $f(x)$ is atomic, then so is $x^\alpha f(x)$.

Suppose $f$ is not atomic. By Lemma $16$, there exists some $\beta \in S$ with limit $7$ and atomic $g(x) \in R$ such that $f(x) = x^\beta g(x)$. So, $x^\alpha f(x) = x^{\alpha + \beta} g(x)$. $\alpha + \beta$ has limit $10$. By Lemma $15$, $\alpha + \beta \in \langle M \rangle$. There exist $m_1, \ldots, m_n \in M$ such that $\alpha + \beta = m_1 + \ldots + m_n$. In this case, $x^\alpha f(x) = x^{m_1} \ldots x^{m_n} g(x)$. It is possible to express $x^\alpha f(x)$ as a product of irreducible elements.
\end{proof}

\section{A Correction}\label{App: Appendix B}

Let $D = \F_2 [X, y, Z]_{(X, y, Z)}$, where $X = \{x^\alpha : \alpha \in \Q_+\}$ and $Z = \{x^\alpha y^k : \alpha \in \Q, k \in \Z, k > 1\}$. We prove that $D$ is almost atomic, but not Furstenberg. Every element of $D$ is the product of a polynomial and a unit. For convenience, we only consider the polynomial parts of each element.

\begin{lemma} The irreducible polynomials in $R$ are precisely those in which the coefficient of $y$ is $1$ and the constant term is $0$.
\end{lemma}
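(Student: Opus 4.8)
The plan is to work throughout with polynomial representatives, as the remark preceding the lemma permits: a polynomial $f$ is a non-unit of $D$ precisely when its constant term is $0$, and $f$ is irreducible in $D$ precisely when it is a non-unit and cannot be written as a product of two non-unit polynomials. The first step is to identify the monomials of $D$: a monomial $x^\alpha y^k$ lies in $D$ if and only if $\alpha \ge 0$ when $k \in \{0,1\}$, and $\alpha$ is an arbitrary rational when $k \ge 2$. This comes from running through the products of the generators $x^\alpha$ ($\alpha \in \Q_+$), $y$, and $x^\alpha y^k$ ($\alpha \in \Q$, $k \ge 2$); the essential observation is that a monomial of $y$-degree at most $1$ can only be assembled from the $X$-generators together with at most one factor of $y$, which forces a nonnegative exponent on $x$. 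Accordingly I write every polynomial as $f = f_0(x) + f_1(x)\,y + (\text{terms of } y\text{-degree} \ge 2)$, where $f_0$ and $f_1$ involve only nonnegative powers of $x$; then $f$ is a unit iff $f_0$ has a nonzero constant term, so for a non-unit $f$ the part $f_0$ is either $0$ or involves only strictly positive powers of $x$.

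\emph{Sufficiency.} Suppose the coefficient of $y$ in $f$ is $1$ and the constant term is $0$; I claim $f$ is irreducible. It is a nonzero non-unit, so suppose $f = gh$ with $g,h$ non-unit polynomials. Collecting terms of $y$-degree at most $1$ gives $(gh)_0 = g_0 h_0$ and $(gh)_1 = g_0 h_1 + g_1 h_0$. Since $g$ and $h$ are non-units, $g_0$ and $h_0$ involve only strictly positive powers of $x$, while $g_1$ and $h_1$ involve only nonnegative powers of $x$; hence each of $g_0 h_1$ and $g_1 h_0$ involves only strictly positive powers of $x$, so $(gh)_1$ has no constant term. But that says the coefficient of $y$ in $f$ is $0$, a contradiction. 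Thus $f$ is not a product of two non-units, so it is irreducible.

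\emph{Necessity.} I prove the contrapositive: a nonzero non-unit polynomial $f$ whose coefficient of $y$ is $0$ is reducible. Suppose first that $f$ has a monomial of $y$-degree at most $1$. Let $\gamma_0$ be the least exponent of $x$ among such monomials; it is strictly positive, since a $y$-degree-$0$ monomial of $f$ has positive $x$-exponent (the constant term is $0$) and a $y$-degree-$1$ monomial has positive $x$-exponent (the monomial $y$ is absent, as the coefficient of $y$ vanishes). Choose a rational $\gamma$ with $0 < \gamma < \gamma_0$. Then $x^{-\gamma}f$ again lies in $D$ — its monomials of $y$-degree at most $1$ keep positive $x$-exponents and those of $y$-degree at least $2$ tolerate any exponent — and it is a non-unit, so $f = x^\gamma \cdot (x^{-\gamma}f)$ is a factorization into two non-units. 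Suppose instead that every monomial of $f$ has $y$-degree at least $2$, say $f = \sum_j x^{\delta_j} y^{k_j}$ with all $k_j \ge 2$. Put $\delta = \min_j \delta_j$ and write $f = (x^\delta y^2)\Big(\sum_j x^{\delta_j - \delta}\, y^{k_j - 2}\Big)$; the cofactor lies in $D$ because each of its monomials of $y$-degree at most $1$ has $x$-exponent $\delta_j - \delta \ge 0$. The factor $x^\delta y^2$ is a non-unit; if the cofactor is also a non-unit we are done, and if it is a unit then $f$ is an associate of $x^\delta y^2$, which itself factors properly as $x^{|\delta|+1} \cdot (x^{\delta - |\delta| - 1} y^2)$. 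In every case $f$ is reducible.

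The step I expect to be the main obstacle is the second case of necessity: when $f$ is supported entirely in $y$-degree $\ge 2$ the exponents of $x$ may be negative, so one cannot simply divide $f$ by a power of $y$ and stay inside $D$. The remedy is to extract a single generator $x^\delta y^2$ with $\delta$ the minimal $x$-exponent, which forces the cofactor back into $D$; one then disposes separately of the degenerate case in which that cofactor is a unit, using that $x^\delta y^2$ is always reducible and that an associate of a reducible element is reducible. The remaining ingredients — the explicit list of monomials of $D$ and the characterization of non-units as the polynomials with constant term $0$ — are routine given the definition of $D$ and the preceding remark.
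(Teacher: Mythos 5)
Your proof is correct and follows essentially the same route as the paper's: irreducibility of the polynomials containing the monomial $y$ comes from observing that in a product of two non-unit polynomials every contribution to the $y$-degree-one part carries a strictly positive power of $x$, and reducibility of the remaining non-units comes from splitting off a small positive power of $x$ (or, in your extra sub-case, the generator $x^\delta y^2$). Your write-up is somewhat more careful than the paper's terse necessity argument, but the underlying ideas coincide.
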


\begin{proof} Let $f(x, y)$ be a polynomial in $R$. If the constant term of $f$ is $1$, then $f$ is a unit. Suppose the constant term is $0$. If the coefficient of $y$ is not $1$, then there exists some $\alpha \in \Q_+$ such that $x^\alpha$ divides $f(x)$. Of course, $x^\alpha$ is not irreducible or a unit. Suppose $y$ is a term in $f$. Suppose $f(x, y) = g(x, y)h(x, y)$, where $g$ and $h$ are polynomials in $R$. The product of one of the terms of $g$ and one of the terms of $h$ is $y$. Therefore, $1$ is a term of either $g$ or $h$, making either $g$ or $h$ a unit. If we multiply $f$ by a unit, one of the terms is still $y$. In other words, if we write $f$ as the product of two elements of $R$, then one of those elements is a unit. $f$ is irreducible.
\end{proof}

\begin{corollary} Let $f(x, y)$ be a polynomial in $R$. If there exists a non-negative integer $k$ such that $y^{k +1}$ is a term in $f$ and $f$ is a multiple of $y^k$, then $f$ is atomic.
\end{corollary}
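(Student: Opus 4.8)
The plan is to reduce everything to the preceding lemma, which classifies the irreducible elements of $R$ as exactly the non-unit polynomials in which the coefficient of $y$ is $1$. Since $f$ is a multiple of $y^k$, I would first write $f = y^k g$ with $g \in R$; that $g$ really is a polynomial of $R$ is immediate from the description of the monomials of $R$, since dividing any monomial of $f$ by $y^k$ again lands in the monomial monoid. The crucial observation is then that among the monomials of $R$ the factorization $y^{k+1} = y^k \cdot y$ is the only one: if a monomial $m$ satisfies $y^k m = y^{k+1}$, then $m = y$. Consequently the coefficient of $y^{k+1}$ in $f = y^k g$ is exactly the coefficient of $y$ in $g$, and since $y^{k+1}$ is a term of $f$ this coefficient is $1$; that is, $y$ is a term of $g$.

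Now I would apply the preceding lemma to $g$. A polynomial of $R$ in which the coefficient of $y$ is $1$ is either irreducible (when its constant term is $0$) or a unit (when its constant term is $1$). In the first case $f = y\cdot y\cdots y\cdot g$ is a product of $k+1$ irreducible elements, because $y$ itself is irreducible by the same lemma, so $f$ is atomic. In the second case $f$ is an associate of $y^k$, which for $k \geq 1$ is a product of $k$ copies of the irreducible element $y$ and for $k = 0$ is a unit; either way $f$ is atomic. This completes the argument.

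Since the whole proof is this direct, there is no real obstacle; the only points demanding a moment's care are bookkeeping points, namely verifying that $y^{-k}f$ stays inside $R$, checking that no cancellation can destroy the term $y^{k+1}$ in the product $y^k g$ (which is precisely the uniqueness of the monomial factorization of $y^{k+1}$), and disposing of the degenerate situations in which $g$ turns out to be a unit or $k = 0$. The conceptual content is entirely carried by the preceding lemma, which forces any polynomial containing $y$ as a term to be irreducible or a unit.
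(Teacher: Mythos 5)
Your proposal is correct and follows essentially the same route as the paper: write $f = y^k g$, observe that $y$ is a term of $g$, and invoke the preceding lemma to conclude $g$ is irreducible (so $f$ is a product of irreducibles). You are somewhat more careful than the paper in checking that the term $y^{k+1}$ survives in the product and in handling the degenerate cases where $g$ is a unit or $k=0$, but the underlying argument is identical.
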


\begin{proof} Because $f$ is a multiple of $y^k$, there exists some $g(x, y) \in R$ such that $f(x, y) = y^k g(x, y)$. One of the terms of $g$ is $y$. Therefore, $g$ is irreducible and $f$ is a product of irreducible elements.
\end{proof}

\begin{theorem} $R$ is almost atomic.
\end{theorem}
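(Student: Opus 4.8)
The plan is to derive this from the Corollary just proved: given a non-unit $f\in R$, I will exhibit finitely many irreducibles $\pi_{1},\dots,\pi_{r}$ such that, for some $k\ge 0$, the product $f\pi_{1}\cdots\pi_{r}$ has $y^{k+1}$ as a term and is a multiple of $y^{k}$ in $R$, so that it is atomic by the Corollary. The only irreducibles I need are of the two shapes $y+x^{c}$ and $x^{-c}y^{2}+y$ with $c\in\Q_{>0}$: each lies in $R$ (note $x^{-c}y^{2}\in Z$, since its $y$-degree is $2>1$), and each is irreducible by the characterization of irreducibles above, its coefficient of $y$ being $1$ and its constant term $0$. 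I will use constantly that a monomial $x^{\alpha}y^{m}$ of $R$ with $\alpha<0$ must have $m\ge 2$; hence a polynomial of $R$ is divisible by $y^{k}$ in $R$ exactly when every term has $y$-degree $\ge k$ and every term of $y$-degree $k$ or $k+1$ has non-negative $x$-exponent. Call a polynomial \emph{clean in degree $m$} if all its terms of $y$-degree $m$ have non-negative $x$-exponent.

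Let $j=\operatorname{ord}_{y}(f)$ be the least $y$-degree occurring in $f$, and let $\gamma$ be the least $x$-exponent among the $y$-degree-$j$ terms of $f$. First, two configurations are immediate: if $\gamma=0$ then $y^{j}$ is a term of $f$, the quotient $f/y^{j-1}$ is irreducible, and $f$ is already atomic (here $j\ge 1$, as $f$ is a non-unit); and if $f$ is clean in degrees $j$ and $j+1$ and $y^{j+1}$ is a term of $f$, then $f$ itself satisfies the Corollary. Outside these, exactly one of the following occurs. \emph{(i)} $f$ is not clean in degree $j$; then $\gamma<0$ and $j\ge 2$, and after multiplying by $y+x^{-\gamma}$ one gets a polynomial of $y$-order $j$ whose $y$-degree-$j$ part is $\sum x^{\beta-\gamma}y^{j}$ taken over the degree-$j$ terms $x^{\beta}y^{j}$ of $f$ — all exponents $\ge 0$, and $y^{j}$ is present — so the Corollary applies with $k=j-1$. \emph{(ii)} $f$ is clean in degree $j$ but not in degree $j+1$; then $\gamma>0$ and $j\ge 1$, and multiplying by $x^{-\gamma}y^{2}+y$ gives $f\pi_{1}$ of $y$-order $j+1$ whose degree-$(j+1)$ part has all $x$-exponents $\ge\gamma>0$, while its degree-$(j+2)$ part has some negative $x$-exponent, say least value $\delta<0$; multiplying $f\pi_{1}$ by $y+x^{-\delta}$ then produces a polynomial of $y$-order $j+1$ that is clean in degrees $j+1$ and $j+2$ and has $y^{j+2}$ as a term, so the Corollary applies with $k=j+1$. \emph{(iii)} $f$ is clean in degrees $j$ and $j+1$, $\gamma>0$, and $f$ has no monomial $y^{j+1}$; then multiplying by $x^{-\gamma}y^{2}+y$ already yields such a polynomial, and the Corollary applies with $k=j+1$.

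The step I expect to demand the most care is (ii): one must follow the effect of multiplying by $y+x^{c}$ on each monomial, namely $x^{\alpha}y^{m}\mapsto x^{\alpha}y^{m+1}+x^{\alpha+c}y^{m}$, and check both that negative-$x$-exponent monomials are only ever pushed to strictly higher $y$-degree — so that those newly created by the second factor land at $y$-degree $\ge j+3$, outside the window the Corollary looks at — and that the pure power $y^{j+2}$ one manufactures is not destroyed by cancellation over $\F_2$. The remaining nuisance is the collection of edge cases in which $f$, or an intermediate product, carries a monomial power of $y$; these are harmless, since any element of $R$ with coefficient of $y$ equal to $1$ and constant term $0$ is already irreducible, and an $f$ with $y^{j}$ at its lowest $y$-degree is covered by the $\gamma=0$ observation.
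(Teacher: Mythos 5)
Your proposal is correct and follows essentially the same route as the paper: both arguments reduce to the Corollary by multiplying $f$ by explicitly chosen irreducibles of the shapes $y+x^{c}$ and $y+x^{-c}y^{2}$, guided by a case analysis on the lowest $y$-degree terms of $f$ and their $x$-exponents. The only substantive difference is in your case (ii), where you use two irreducible multipliers in succession, whereas the paper handles the corresponding situation with the single multiplier $y+x^{\beta}$, $\beta$ being the largest value for which $x^{-\beta}y^{k+1}$ occurs in $f$; both versions work.
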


\begin{proof} Let $f(x, y) \in R$. $f(x, y)$ is the product of a polynomial and a unit. We just consider the polynomial part. Let $x^\alpha y^k$ be the first term of $f$ in the lexicographic ordering on $\Z \oplus \Q$. In other words, we choose $k$ to be as small as possible. If there is a tie, we choose the term with the smallest $\alpha$. There are a few cases.
\begin{enumerate}
\item{$\alpha = 0$. $y^k$ is the smallest term of $f(x, y)$. The exponent of $y$ in any term of $f$ is at least $k$. If $k = 0$, then $f$ is a unit. Otherwise, every term is a multiple of $y^{k - 1}$, making $f$ atomic by Corollary $1$.}
\item{$\alpha < 0$. By assumption, $k > 1$. Consider $(y + x^{-\alpha})f(x, y)$. If the exponent of $y$ in a given term is $r$ with $r \geq 2$, then that term is a multiple of $y^{r - 2}$. Thus, every term in $f(x, y)$ is a multiple of $y^{k - 2}$. Therefore, $y^{k - 1}$ divides $yf$. The smallest term in $x^{-\alpha} f(x, y)$ is $y^k$. Every term in $x^{-\alpha} f$ has the form $x^\beta y^k$ with $\beta \geq 0$ or $x^\beta y^r$ with $r > k$. Either way, $y^{k - 1}$ divides $x^\beta y^k$. Hence, $y^{k - 1}$ divides $x^{-\alpha} f$. Thus, $y^{k - 1} \mid (y + x^{-\alpha}) f$, which has $y^k$ as a term. $(y + x^{-\alpha})f(x, y)$ is atomic.}
\item{$\alpha > 0$ and $f(x, y)$ does not contain a term of the form $x^{-\beta} y^{k + 1}$ with $\beta \in \Q_+$. Consider $(y + x^{-\alpha} y^2) f(x, y)$. Every term in $f(x, y)$ has the form $x^\gamma y^k$ with $\gamma \geq 0$, $x^\gamma y^{k + 1}$ with $\gamma \geq 0$, or $x^\gamma y^r$ with $\gamma \in \Q$ and $r > k + 1$. All of these terms are multiples of $y^k$. Therefore, $y^{k + 1}$ divides $yf(x, y)$. The smallest term in $x^{-\alpha} y^2 f(x, y)$ is $y^{k + 2}$. Hence, $y^{k + 1}$ divides $x^{-\alpha} y^2 f(x, y)$ as well. $(y + x^{-\alpha} y^2) f(x, y)$ is almost atomic because it is a multiple of $y^{k + 1}$ and has $y^{k + 2}$ as one of its terms.}
\item{$\alpha > 0$ and $f(x, y)$ contains a term of the form $x^{-\beta} y^{k + 1}$ with $\beta \in \Q_+$. Choose $\beta$ to be the largest rational number in which $x^{-\beta} y^{k + 1}$ is a term of $f$. Consider $(y + x^\beta) f(x, y)$. Every term in $x^\beta f(x, y)$ has the form $x^\gamma y^k$ with $\gamma > 0$, $x^\gamma y^{k + 1}$ with $\gamma \geq 0$, or $x^\gamma y^r$ with $\gamma \in \Q$ and $r > k + 1$. All of these terms are multiples of $y^{k + 1}$. All of these terms are multiples of $y^k$. Every term in $yf(x, y)$ has the form $x^\gamma y^{k + 1}$ with $\gamma > 0$ or $x^\gamma y^r$ with $r > k + 1$. Once again, these terms are all multiples of $y^k$. Hence, $y^k \mid (y + x^\beta) f(x, y)$. Also, $(y + x^\beta) f(x, y)$ has $y^{k + 1}$ as a term.}
\end{enumerate}
\end{proof}

In \cite[Ex. $5.5.1$]{BC}, the authors claim that $D$ is not almost atomic. But, we have just shown that this is incorrect. They obtained this result due to a mistake in their Example $4.4.1$. They claimed that the irreducible elements of $D$ are precisely those in which the smallest term is $y$ when the terms are put in lexicographic ordering as elements of $\Z \oplus \Q$. In fact, the irreducible elements are simply those which contain $y$ as an element. For example, $y + x^\alpha$ comes before $y$ in the lexicographic ordering for any $\alpha \in \Q_+$. Unfortunately, $D$ is the only example of a quasi-atomic domain that is not almost atomic in \cite{BC}. Example $9$ in this paper is quasi-atomic, but not almost atomic.
\end{appendix}

\end{document}